\documentclass[1pt]{article}
%%%%%%%%%%%%%%%%%%%%%%%%%%%%%%%%%%%%%%%%%%%%%%%%%%%%%%%%%%%%%%%%%%%%%%%%%%%%%%%%%%%%%%%%%%%%%%%%%%%%%%%%%%%%%%%%%%%%%%%%%%%%%%%%%%%%%%%%%%%%%%%%%%%%%%%%%%%%%%%%%%%%%%%%%%%%%%%%%%%%%%%%%%%%%%%%%%%%%%%%%%%%%%%%%%%%%%%%%%%%%%%%%%%%%%%%%%%%%%%%%%%%%%%%%%%%
\usepackage{amsmath, amssymb, amsthm, amsfonts, cases}
\usepackage{mathrsfs}
\usepackage{url}
\usepackage{authblk}
\usepackage[usenames]{color}
\usepackage{geometry}
\geometry{left=2cm,right=2cm,top=3cm,bottom=3cm}

\theoremstyle{plain}
\newtheorem{thm}{Theorem}[section]

\newtheorem{pro}[thm]{Problem}
\newtheorem{lem}[thm]{Lemma}

\theoremstyle{definition}

\newtheorem{ass}{Assumption}[section]
\newtheorem{rmk}{Remark}[section]

\newcommand{\eps}{\varepsilon}

\makeatletter\@addtoreset{equation}{section} \makeatother

\begin{document}

\title{Linear-Quadratic Optimal Control Problems for Mean-Field
Stochastic Differential Equations with Jumps
\thanks{This work was supported by the Natural Science Foundation of Zhejiang Province
for Distinguished Young Scholar  (No.LR15A010001),  andthe National Natural
Science Foundation of China (No.11471079, 11301177) }}

\date{}

\author[a]{Maoning Tang}
\author[a]{Qingxin Meng\footnote{Corresponding author.
\authorcr
\indent E-mail address: mqx@hutc.zj.cn(Q. Meng)}}

\affil[a]{\small{Department of Mathematical Sciences, Huzhou University, Zhejiang 313000, China}}

\maketitle

\begin{abstract}
\noindent
In this paper, we study a linear-
quadratic optimal control problem 
for mean-field stochastic 
differential equations 
driven by a Poisson random martingale measure and a multidimensional
Brownian motion. Firstly, the existence and 
uniqueness of the optimal control is 
obtained by the classic  convex variation 
principle. Secondly, by the duality method, the optimality system, also called the stochastic Hamilton system which turns out to be
a linear fully coupled mean-field forward-backward stochastic differential equation with jumps, is derived to characterize the optimal control.
Thirdly, applying a decoupling technique, 
we establish the connection between two Riccati equation and the stochastic Hamilton system and  then prove the optimal control  has a state feedback representation.

\end{abstract}

\section{Introduction}

\subsection{Notations}
~~~~Let $T$ be a fixed strictly positive real number and  $(\Omega,
\mathscr{F},\{\mathscr{F}_t\}_{0\leq t\leq T}, P)$ be a complete probability space on which a d-dimensional standard Brownian motion $\{W(t), 0\leq t\leq T\}$
is defined. Denote by $\mathscr{P}$
the $\mathscr{F}_t$-predictable $\sigma$-field on $[0, T]\times \Omega$ and by $\mathscr B(\Lambda)$
  the Borel $\sigma$-algebra of any topological space $\Lambda.$ Let $(Z,\mathscr B (Z), v
)$ be a measurable space with $v(Z)<\infty $ and $\eta: \Omega\times D_\eta \longrightarrow Z$ be an
$\mathscr F_t$-adapted stationary Poisson point process with characteristic measure $v$, where
 $D_\eta$  is a countable subset of $(0, \infty)$. Then the counting measure induced by $\eta$ is
$$
\mu((0,t]\times A):=\#\{s\in D_\eta; s\leq t, \eta(s)\in A\},~~~for~~~ t>0, A\in \mathscr B (Z).
$$
And $\tilde{\mu}(de,dt):=\mu(de,dt)-v(de)dt$ is a compensated Poisson random martingale  measure which
is assumed to be independent of Brownian motion
$\{W(t), 0\leq t\leq T\}$. Assume $\{\mathscr{F}_t\}_{0\leq t\leq T}$ is $P$-completed natural
 filtration generated by $\{W(t),
0\leq t\leq T\}$ and $\{\iint_{A\times (0,t] }\tilde{\mu}(de,ds), 0\leq t\leq T, A\in \mathscr B (Z) \}.$  In the following,  we introduce the basic
notation used throughout this paper.

$\bullet$~~$H$: a Hilbert space with norm $\|\cdot\|_H$.

$\bullet$~~$\langle\alpha,\beta\rangle:$ the inner product in
$\mathbb{R}^n, \forall \alpha,\beta\in\mathbb{R}^n.$

$\bullet$~~$|\alpha|=\sqrt{\langle\alpha,\alpha\rangle}:$ the norm
of $\mathbb{R}^n,\forall \alpha\in\mathbb{R}^n.$

$\bullet$~~$\langle A,B\rangle=tr(AB^\top):$ the inner product in
$\mathbb{R}^{n\times m},\forall A,B\in \mathbb{R}^{n\times m}.$
Here denote by $B^\top$ the transpose of a matrix B.

$\bullet$~~$|A|=\sqrt{tr(AA^\top)}:$ the norm of $A$.

$\bullet$~~$S^n:$ the set of all $n\times n$ symmetric matrices.

$\bullet$~~ $S^n_+:$ the subset of all non-negative definite matrices of $S^n.$

$\bullet$~~$S_{\mathscr{F}}^2(0,T;H):$ the space of all $H$-valued
and ${\mathscr{F}}_t$-adapted  c\`{a}dl\`{a}g processes
$f=\{f(t,\omega),\ (t,\omega)\in[0,T]\times\Omega\}$ satisfying
$$\|f\|_{S_{\mathscr{F}}^2(0,T;H)}^2\triangleq{
\mathbb E\displaystyle\sup_{0 \leq t \leq T}\|f(t)\|_H^2dt}<+\infty.$$

$\bullet$~~$ M_{\mathscr{F}}^2(0,T;H):$ the space of all $H$-valued
and ${\mathscr{F}}_t$-adapted processes $f=\{f(t,\omega),\
(t,\omega)\in[0,T]\times\Omega\}$ satisfying
$$\|f\|_{M_{\mathscr{F}}^2(0,T;H)}^2\triangleq {\mathbb E\displaystyle\bigg[\int_0^T\|f(t)\|_H^2dt}\bigg]<\infty.$$

$\bullet$~~${M}^{\nu,2}( Z; H):$ the space of all H-valued measurable
  functions $r=\{r(\theta), \theta \in Z\}$ defined on the measure space $(Z, \mathscr B(Z); v)$ satisfying
$$\|r\|_{{ M}^{\nu,2}( Z; H)}^2\triangleq{\displaystyle\int_Z\|r(\theta)\|_H^2v(d\theta)}<~\infty.$$

 $\bullet$~~ ${M}_{\mathscr{F}}^{\nu,2}{([0,T]\times  Z; H)}:$ the  space of all ${M}^{\nu,2}( Z; H)$-valued
and ${\cal F}_t$-predictable processes $r=\{r(t,\omega,e),\
(t,\omega,e)\in[0,T]\times\Omega\times Z\}$ satisfying
$$\|r\|_{M_{\mathscr F}^{\nu,2}([0,T]\times  Z; H)}^2\triangleq {\mathbb E\bigg[\int_0^T\displaystyle\|r(t,\cdot)\|^2_
{{M}^{\nu,2}( Z; H)}dt}\bigg]<~\infty.$$
$\bullet$~~$L^2(\Omega,{\mathscr {F}},P;H):$ the space of all
$H$-valued random variables $\xi$ on $(\Omega,{\mathscr  {F}},P)$
satisfying
$$\|\xi\|_{L^2(\Omega,{\mathscr{F}},P;H)}\triangleq \mathbb E[\|\xi\|_H^2]<\infty.$$

\subsection{ Formulation of Problem}

Consider the following linear stochastic system driven by
 Brownian motion $\{W(t)\}_{0\leq t\leq T}$ and Poisson random  martingale  measure $\{\tilde{\mu}(d\theta,dt)\}_{0\leq t\leq T}$
\begin{equation}\label{eq:1.1}
\left\{\begin {array}{ll}
  dX(t)=&(A(t)X(t)+\bar A(t)\mathbb E [X(t)]
  +B(t)u(t)+\bar B(t)\mathbb E [u(t)])dt
  \\&+(C(t)X(t)
  +\bar C(t)\mathbb E [X(t)]
  +D(t)u(t)+\bar D(t)\mathbb E [u(t)])dW(t)
  \\&+\displaystyle
  \int_{Z}(E(t, \theta)X(t-)+\bar E(t, \theta)\mathbb E [X(t-)]
  +F(t,\theta)u(t)+\bar F(t, \theta)\mathbb E[u(t)])\tilde{\mu}(d\theta,
  dt),
   \\x(0)=&x \in \mathbb R^n,
\end {array}
\right.
\end{equation}
with the following quadratic cost {functional}
\begin{eqnarray}\label{eq:1.2}
\begin{split}
 J(x, u(\cdot))=&\displaystyle \mathbb E\bigg[\int_0^T\bigg(\langle Q(t)X(t),
X(t)\rangle+ \langle \bar {
Q}(t)\mathbb E[X(t)], \mathbb E[X(t)]\rangle +\langle N(t)u(t), u(t)\rangle\\&+\langle \bar
{N}(t)\mathbb E[u(t)], \mathbb E[u(t)]\rangle \bigg)dt\bigg]+\mathbb E[\langle
GX(T), X(T)\rangle] +\langle \bar{ G}\mathbb E[X(T)], \mathbb
E[X(T)]\rangle,
\end{split}
\end{eqnarray}
where $A(\cdot), \bar A(\cdot), B(\cdot), \bar B(\cdot),C(\cdot), \bar C(\cdot), D(\cdot), \bar D(\cdot)
, E(\cdot,\cdot), \bar E(\cdot,\cdot),
F(\cdot,\cdot),\bar F(\cdot), Q(\cdot), \bar Q(\cdot),
 N(\cdot), \bar N(\cdot) $  are given
 matrix valued deterministic functions, and
 $G$ and $\bar G$ are given matrices.

In the above, $u(\cdot)$ is our admissible control process. In this paper, a predictable stochastic process  $u(\cdot)$ is said to be an admissible control, if $ u(\cdot)\in
M_{\mathscr F}^2(0, T;\mathbb R^m)$. The set of all admissible controls is denoted by ${\cal A}$ .
 For any admissible control $u(\cdot),$ the strong solution of the system { \eqref{eq:1.1}},  denoted by $X^{(x,u)}(\cdot)$ or $X(\cdot)$
  if its dependence on
  admissible control $u(\cdot)$ is clear from  the context,   is called the
state process corresponding to the control process $u(\cdot)$, and
 $(u(\cdot), X(\cdot))$ is called an
admissible pair.

Our optimal  control problem can be stated
as follows:
\begin{pro}\label{pro:1.1}
For given $x\in \mathbb R^n,$
find an admissible control ${u}^*(\cdot)$ such that
\begin{equation}\label{eq:b7}
J(x,{u}^*(\cdot))=\displaystyle\inf_{u(\cdot)\in {\cal A}}J(x, u(\cdot)).
\end{equation}
\end{pro}

Any  ${u}^*(\cdot)\in {\cal A}$ satisfying  the above is called an
optimal control process of Problem \ref{pro:1.1} and the
corresponding state process $X^*(\cdot)$ is
called the optimal state process. Correspondingly $(u^*(\cdot),
X^*(\cdot))$ is called an optimal pair of
Problem
\ref{pro:1.1}.

Note that  $\mathbb E [X(\cdot)]$ and $ \mathbb E [u(\cdot)]$  appear in  the state equation
and cost functional.  Such a state equation is
referred to as  a mean-field stochastic differential equation (MF-SDE).
 For details on motivations for the inclusion of $\mathbb E [X(\cdot)]$ and $ \mathbb E[u(\cdot)]$ in the
 cost functional, the the interested
  reader is referred to [28].

Throughout this paper, we make the following assumptions on the coefficients

\begin{ass}\label{ass:1.1}
 The matrix-valued functions $A, \bar A, C, \bar C,Q, \bar Q:[0, T]\rightarrow \mathbb R^{n\times n};
B,\bar B,D, \bar D :[0, T]\rightarrow \mathbb R^{n\times m}; E, \bar E:[0, T]\rightarrow {\cal L}^{v,2}(Z; \mathbb R^{n\times n}), F, \bar F:[0,T]\rightarrow  {\cal L} ^{v,2}(Z; \mathbb R^{n\times m}); N, \bar N:[0, T]\rightarrow \mathbb R^{m\times m}$ are uniformly bounded measurable functions.
\end{ass}

\begin{ass}\label{ass:1.2}
 The matrix-valued functions $Q, Q+\bar Q, N, N+\bar N$ are a.e. nonnegative
matrix, and $M, M+\bar M$ are nonnegative matrices.  Moreover, $N,N+\bar N $  uniformly positive, i.e. for $\forall u\in \mathbb R^m$ and a.s. $t\in [0, T]$,
$ \langle N(t)u, u \rangle \geq \delta \langle u, u\rangle
$ and $ \langle (N(t)+\bar N(t))u, u \rangle \geq \delta \langle  u, u\rangle,
$  for some positive constant
$\delta.$
\end{ass}

The following result gives the well-posedness of the state equation as well as some useful estimates.

\begin{lem}\label{lem:1.1}
Let Assumption \ref{ass:1.1}  be satisfied. Then for any admissible control $u(\cdot)$,
the state equation
\eqref{eq:1.1}
has a unique solution $X(\cdot) \in S_{\mathscr F}^2 ( 0, T; \mathbb R^n).$
Moreover, we have the following estimate
\begin{eqnarray}\label{eq:1.4}
\begin{split}
  \mathbb E \Big[\sup_{0\leq t\leq T}| X(t)|^2\Big] \leq K \mathbb E\bigg[\int_0^T
  |u(t)|^2 dt+x\bigg]
  \end{split}
\end{eqnarray}
and
\begin{eqnarray}\label{eq:1.5}
\begin{split}
  |J(x, u(\cdot))|< \infty.
  \end{split}
\end{eqnarray}
Suppose that $ \bar X(\cdot)$  be the state process corresponding to
another admissible control $\bar u(\cdot),$  then we have the
following estimate
\begin{eqnarray} \label{eq:1.6}
\begin{split}
  \mathbb E \bigg[\sup_{0\leq t\leq T}| X(t)- \bar X(t)|^2\bigg]\leq K \mathbb E\bigg[\int_0^T
  |u(t)-\bar u(t)|^2dt\bigg].
  \end{split}
\end{eqnarray}
\end{lem}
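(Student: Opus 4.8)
The plan is to exploit the linearity of the system together with the fact that the mean-field coupling enters only through the deterministic function $\mathbb E[X(\cdot)]$, which lets me decouple that dependence before invoking the classical theory for SDEs with jumps. First I would take expectations in \eqref{eq:1.1}. Since $u(\cdot)\in M_{\mathscr F}^2(0,T;\mathbb R^m)$, Fubini's theorem gives $\int_0^T|\mathbb E[u(t)]|^2\,dt\le \mathbb E\int_0^T|u(t)|^2\,dt<\infty$, and because the stochastic integrals against $W$ and $\tilde\mu$ are martingales with zero mean, any solution would force $m(t):=\mathbb E[X(t)]$ to satisfy the linear ODE
\begin{equation*}
\dot m(t)=(A(t)+\bar A(t))m(t)+(B(t)+\bar B(t))\mathbb E[u(t)],\qquad m(0)=x.
\end{equation*}
Under Assumption \ref{ass:1.1} the coefficients are bounded measurable and the forcing term lies in $L^2(0,T;\mathbb R^n)$, so I would instead define $m(\cdot)$ directly as the unique absolutely continuous solution of this ODE, which by Gronwall's inequality satisfies $\sup_{0\le t\le T}|m(t)|^2\le K(|x|^2+\int_0^T|\mathbb E[u(t)]|^2\,dt)\le K(|x|^2+\mathbb E\int_0^T|u(t)|^2\,dt)$.

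Next I would substitute this known $m(\cdot)$ for $\mathbb E[X(\cdot)]$ in \eqref{eq:1.1}. The resulting equation is a standard non-mean-field linear SDE with jumps whose coefficients in $X$ are the bounded matrices $A,C,E$ and whose inhomogeneous parts (such as $\bar A m+Bu+\bar B\mathbb E[u]$) are square-integrable $\mathscr F_t$-predictable processes. By the classical existence and uniqueness theorem for equations driven by a Brownian motion and a compensated Poisson random measure with Lipschitz (here linear) coefficients, it has a unique strong solution $X(\cdot)\in S_{\mathscr F}^2(0,T;\mathbb R^n)$. To close the argument I must verify the consistency condition $\mathbb E[X(\cdot)]=m(\cdot)$: taking expectations in the solved equation shows $\mathbb E[X(\cdot)]$ satisfies exactly the same ODE as $m$ with the same initial value $x$, so uniqueness for that ODE forces $\mathbb E[X(t)]=m(t)$, confirming that $X(\cdot)$ solves the original mean-field equation. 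Uniqueness for \eqref{eq:1.1} follows because any solution has mean equal to $m$ and then solves the decoupled SDE, whose solution is unique.

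For the estimate \eqref{eq:1.4} I would apply the It\^o formula to $|X(t)|^2$, take expectations so the $dW$- and $\tilde\mu$-martingale terms drop out, and bound the drift, diffusion and jump-compensator terms by $K(|X|^2+|m|^2+|u|^2+|\mathbb E[u]|^2)$ using boundedness of the coefficients and Young's inequality; inserting the bound on $\sup_t|m(t)|^2$ and applying Gronwall's inequality yields $\sup_{0\le t\le T}\mathbb E|X(t)|^2\le K(|x|^2+\mathbb E\int_0^T|u(t)|^2\,dt)$. To upgrade this to the supremum inside the expectation I would estimate the two stochastic integrals by the Burkholder--Davis--Gundy inequality, which controls $\mathbb E\sup_t|\cdot|^2$ by the same quadratic-variation integrals already bounded, giving \eqref{eq:1.4}. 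Then \eqref{eq:1.5} is immediate, since boundedness of $Q,\bar Q,N,\bar N,G,\bar G$ together with \eqref{eq:1.4} and $u\in M_{\mathscr F}^2(0,T;\mathbb R^m)$ makes every term of $J$ finite. Finally, for \eqref{eq:1.6} I would note that by linearity $\hat X:=X-\bar X$ solves an equation of the same form as \eqref{eq:1.1} with zero initial condition and control $\hat u:=u-\bar u$, so applying \eqref{eq:1.4} to $(\hat X,\hat u)$ with $x=0$ gives \eqref{eq:1.6} directly.

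I expect the main obstacle to be the mean-field coupling itself: the standard existence and uniqueness theorem does not apply verbatim because the coefficients depend on the functional $\mathbb E[X(t)]$ rather than pointwise on $X(t)$. The decoupling via the mean ODE is precisely what removes this difficulty in the linear setting, and the one genuinely delicate point is \emph{establishing} the consistency $\mathbb E[X(\cdot)]=m(\cdot)$ rather than assuming it, which is handled by uniqueness of the mean ODE. Alternatively one could run a Picard or contraction argument directly in $S_{\mathscr F}^2(0,T;\mathbb R^n)$, using $|\mathbb E[X]-\mathbb E[\bar X]|\le(\mathbb E|X-\bar X|^2)^{1/2}$ to obtain the contraction, but the decoupling route is cleaner here.
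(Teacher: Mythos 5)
Your argument is correct, and for the existence--uniqueness part it takes a genuinely different route from the paper. The paper disposes of well-posedness in one line by ``a standard argument using the contraction mapping theorem,'' i.e.\ a Picard iteration directly in $S_{\mathscr F}^2(0,T;\mathbb R^n)$, exactly the alternative you mention at the end; the mean-field terms are handled inside the fixed-point map via $|\mathbb E[X]-\mathbb E[\bar X]|\le(\mathbb E|X-\bar X|^2)^{1/2}$. Your decoupling approach --- first solving the deterministic ODE for $m(t)=\mathbb E[X(t)]$, then substituting $m$ into \eqref{eq:1.1} to reduce it to a classical linear SDE with jumps, and finally verifying the consistency $\mathbb E[X(\cdot)]=m(\cdot)$ by uniqueness of the ODE --- is more work to set up but buys more: it exhibits the structure that the whole of Section 4 of the paper rests on (the pair $(\mathbb E[X],X-\mathbb E[X])$ satisfying a decoupled ODE/SDE system, cf.\ \eqref{eq:4.1} and \eqref{eq:4.3}), and it yields the bound on $\sup_t|m(t)|^2$ as a free by-product. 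You are right that the one delicate point is establishing rather than assuming the consistency condition, and your uniqueness-of-the-ODE argument handles it; the only justification worth making explicit is that the stochastic integrals of a candidate solution in $S_{\mathscr F}^2$ with bounded coefficients are true martingales, so taking expectations is legitimate. For the estimates \eqref{eq:1.4}--\eqref{eq:1.6} your route (It\^o formula on $|X|^2$, Gronwall, then Burkholder--Davis--Gundy to move the supremum inside the expectation, and linearity to reduce \eqref{eq:1.6} to \eqref{eq:1.4} with $x=0$) coincides with the paper's. Note also that the ``$+x$'' in \eqref{eq:1.4} should read $|x|^2$, as your version correctly has it.
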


\begin{proof}
  The existence and uniqueness of the  solution can be  obtained by a standard argument using the contraction mapping theorem.
  For the estimates \eqref{eq:1.4} and \eqref{eq:1.6}, we can
  easily  obtain them  by  applying the It\^{o} formula to $|X(\cdot)|^2$ and
   $|X(\cdot)-\bar X(\cdot)|^2 $, Gronwall inequality and B-D-G inequality.
   For the  estimate \eqref{eq:1.5}, using  Assumption
   \ref{ass:1.1} and  the  estimate  \eqref{eq:1.4},   we have

   \begin{eqnarray}\label{eq:1.7}
     \begin{split}
       |J(x, u(\cdot))|\leq K \bigg\{\mathbb E \bigg[\sup_{0\leq t\leq T}| X(t)|^2 \bigg] +\mathbb E\bigg[\int_0^T
  |u(t)|^2 dt\bigg]\bigg\} \leq K \mathbb E\bigg[\int_0^T
  |u(t)|^2 dt+x\bigg]<\infty.
     \end{split}
   \end{eqnarray}
   where we have used the elementary inequality:
   for any $\xi \in {L^2(\Omega,{\mathscr {F}},P;H)}$
   \begin{eqnarray}
     \begin{split}
       ||\mathbb E \xi||^2_H\leq  \mathbb E||\xi||_H^2
     \end{split}
   \end{eqnarray}
The proof is complete.
\end{proof}
Therefore, by Lemma \ref{lem:1.1}, we know that
Problem \ref{pro:1.1}
is well-defined.

\subsection{ Related Development and Contributions of
this paper }
 Most recently, thanks to   comprehensive  practical applications such as in
  economics and finance,  optimal control problems of mean-field type  become a
  popular topic and are studied
  by many researcher. The main feature of this
  type of problems is that the coefficients of
   the state equation  and cost  functional depend not only on the state and the control but also on their  probability distribution 
 As described in [4], due to the mean-field term involved in
 the cost functional, the corresponding optimal control problem become to be  a time-inconsistent
  optimal control problem where the dynamic programming principle (DPP) is not  effective  which  makes many  researchers to solve this type of optimal control problems by establishing
  the stochastic maximum  principle (SMP)
  instead of trying extensions of DPP.
    Interested readers may refer to 
    [1-6],[9],[10], [12],[16], [17],
    [22-25],[27] for various versions of the stochastic maximum
principles for the mean-field models.

In 2013, Yong [28] systematically studied the continuous-time mean-field LQ control problem,
where the optimal control is represented as a state feedback form by introducing two Riccati differential equations. Since [28], abundant results and advances have been made on mean-field LQ control problem (cf., for example, [8], [11], [18-21],[26]).
Different the above mentioned references, the purpose of this paper is to
 extend  continuous-time mean-field LQ control 
 problem to jump diffusion system \eqref{eq:1.1} and establish the corresponding theoretical results.
We first establish the existence and uniqueness of the optimal control by classic 
convex variation principle in Section 2. Then, in Section 3, we will establish
the dual characterization of the optimal control by optimality system, also called stochastic Hamiltonian system. Here  the stochastic Hamiltonian system turns out to be a
coupled  forward-backward stochastic differential equation of mean-field type 
with jump,
consisting  of the state equation, the adjoint equation and the dual
presentation of the optimal control.
Although  the  stochastic Hamilton system gives a
complete characterization of the stochastic LQ problem,
 it is a fully coupled forward-backward stochastic differential equation,
 which is very difficult, if not possible, to be solved.
Meanwhile, it is natural to link the stochastic LQ problem with  Riccati equation. In  section 4, we will introduce two Riccati equation  and establish
its connection with the stochastic Hamilton system , and then prove the optimal control  has state feedback representation.

\section{ Existence and Uniqueness of Optimal Control}

In this section, we study the existence and uniqueness of 
the optimal control of Problem \ref{pro:1.1}.  To this end,  We first
establish
some elementary properties of the
cost functional.
\begin{lem}\label{lem:6.2}
  Let Assumptions  \ref{ass:1.1} and \ref{ass:1.2}
  be satisfied. Then  the cost functional $J(x,u(\cdot))$
is continuous over $\cal A.$
\end{lem}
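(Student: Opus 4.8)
The plan is to establish sequential continuity with respect to the norm of $M_{\mathscr{F}}^2(0,T;\mathbb{R}^m)$: I take an arbitrary sequence $u_n(\cdot)\to u(\cdot)$ in $M_{\mathscr{F}}^2(0,T;\mathbb{R}^m)$ and show $J(x,u_n(\cdot))\to J(x,u(\cdot))$. Let $X_n(\cdot)$ and $X(\cdot)$ be the state processes associated with $u_n(\cdot)$ and $u(\cdot)$. The cornerstone is the stability estimate \eqref{eq:1.6} of Lemma \ref{lem:1.1}, which gives
\[
\mathbb{E}\Big[\sup_{0\leq t\leq T}|X_n(t)-X(t)|^2\Big]\leq K\,\mathbb{E}\Big[\int_0^T|u_n(t)-u(t)|^2\,dt\Big]\longrightarrow 0,
\]
so $X_n(\cdot)\to X(\cdot)$ in $S_{\mathscr{F}}^2(0,T;\mathbb{R}^n)$. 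Since a convergent sequence is bounded, $\{u_n(\cdot)\}$ is bounded in $M_{\mathscr{F}}^2(0,T;\mathbb{R}^m)$, and then the a priori estimate \eqref{eq:1.4} yields $\sup_n\mathbb{E}[\sup_{0\leq t\leq T}|X_n(t)|^2]<\infty$.

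Next I split $J(x,u_n(\cdot))-J(x,u(\cdot))$ into the finitely many differences of quadratic forms appearing in \eqref{eq:1.2} and polarize each one. For a running state term I use
\[
\langle QX_n,X_n\rangle-\langle QX,X\rangle=\langle Q(X_n-X),X_n\rangle+\langle QX,X_n-X\rangle,
\]
so that, by the uniform boundedness of $Q(\cdot)$ from Assumption \ref{ass:1.1} and the Cauchy-Schwarz inequality,
\[
\mathbb{E}\int_0^T\big|\langle QX_n,X_n\rangle-\langle QX,X\rangle\big|\,dt\leq K\Big(\mathbb{E}\int_0^T|X_n-X|^2\,dt\Big)^{1/2}\Big(\mathbb{E}\int_0^T(|X_n|+|X|)^2\,dt\Big)^{1/2}.
\]
The first factor tends to zero by \eqref{eq:1.6}, while the second stays bounded by \eqref{eq:1.4}, so this term vanishes. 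The terminal term $\mathbb{E}\langle GX_n(T),X_n(T)\rangle$ is treated identically, now invoking the supremum in \eqref{eq:1.6} to get $X_n(T)\to X(T)$ in $L^2(\Omega,\mathscr{F},P;\mathbb{R}^n)$; the control terms $\langle Nu_n,u_n\rangle$ converge directly from $u_n\to u$ in $M_{\mathscr{F}}^2$ together with the bound on $N$.

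Finally, for the mean-field terms I use the elementary inequality $|\mathbb{E}\xi|^2\leq\mathbb{E}|\xi|^2$ already recorded in the excerpt: from $|\mathbb{E}[X_n(t)]-\mathbb{E}[X(t)]|\leq(\mathbb{E}|X_n(t)-X(t)|^2)^{1/2}$ the convergence of $X_n$ transfers to the deterministic curves $\mathbb{E}[X_n(\cdot)]$, and the terms carrying $\bar Q$, $\bar N$, $\bar G$ are then controlled exactly as in the previous step. Adding the finitely many term-wise bounds gives $J(x,u_n(\cdot))\to J(x,u(\cdot))$, which is the desired continuity. I do not anticipate a genuine obstacle: everything reduces to the stability estimate \eqref{eq:1.6} plus routine Cauchy-Schwarz bookkeeping. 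The only point needing a little care is keeping the second factor $\mathbb{E}\int_0^T(|X_n|+|X|)^2\,dt$ uniformly bounded in $n$, which is precisely what \eqref{eq:1.4} delivers once one notes that a convergent sequence in $M_{\mathscr{F}}^2$ is norm-bounded.
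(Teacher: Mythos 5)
Your proposal is correct and follows essentially the same route as the paper: the paper's single displayed bound on $|J(u(\cdot))-J(\bar u(\cdot))|^2$ is precisely the aggregate of your term-by-term polarization plus Cauchy--Schwarz estimates, with the first factor controlled by the stability estimate \eqref{eq:1.6} and the second kept bounded via \eqref{eq:1.4}. Your write-up is in fact slightly more careful than the paper's (explicitly noting that a convergent sequence in ${\cal A}$ is norm-bounded and handling the mean-field terms via $|\mathbb{E}\xi|^2\leq\mathbb{E}|\xi|^2$), but there is no substantive difference in method.
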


\begin{proof}
 Let $(u (\cdot),  X(\cdot))$ and $(\bar u (\cdot), \bar X (\cdot))$ be any two admissible control pairs.
 Under Assumptions \ref{ass:1.1} and \ref{ass:1.2}, from  the definition of the cost functional $J(x,u(\cdot))$ (see \eqref{eq:1.2}),
 we  have 
\begin{eqnarray}\label{eq:5.10}
| J (u (\cdot)) - J (\bar u(\cdot) ) |^2
&\leq& K \bigg \{   \mathbb E\bigg[\int_0^T
  |u(t)-\bar u(t)|^2dt\bigg]
+   \mathbb E\bigg[\int_0^T
  |X(t)-\bar X(t)|^2dt\bigg] \bigg \} \\
&& \times \bigg \{  \mathbb E\bigg[\int_0^T
  |u(t)|^2dt\bigg]
+   \mathbb E\bigg[\int_0^T
  |X(t)|^2dt\bigg]
  + \mathbb E\bigg[\int_0^T
  |\bar u(t)|^2dt\bigg]
+   \mathbb E\bigg[\int_0^T
  |\bar X(t)|^2dt\bigg] \bigg \} \ . \nonumber
\end{eqnarray}
 Using the estimates \eqref{eq:1.4} and \eqref{eq:1.6} lead to 
\begin{eqnarray}
| J (u (\cdot)) - J (v (\cdot)) |^2
&\leq& K  \bigg \{   \mathbb E\bigg[\int_0^T
  |u(t)-\bar u(t)|^2dt\bigg] \bigg \} \times \bigg \{  \mathbb E\bigg[\int_0^T
  |u(t)|^2dt\bigg]
  + \mathbb E\bigg[\int_0^T
  |\bar u(t)|^2dt\bigg]+x \bigg \}  \ .
\end{eqnarray}
Thus, it follows that 
\begin{eqnarray}
J (u (\cdot)) - J (\bar u (\cdot)) \rightarrow 0 \ , \quad  as \quad u (\cdot) \rightarrow   \bar  u  (\cdot)
\quad in \quad {\cal A} \ .
\end{eqnarray}
The proof is complete.

\end{proof}

 \begin{lem}\label{lem:6.3}
   Let 
Assumptions \ref{ass:1.1} and \ref{ass:1.2}
  be satisfied. Then the cost functional $J(x,u(\cdot))$ is
strictly convex   $\cal A.$ Moreover, the cost
functional $J(x,u(\cdot))$ is coercive over $\cal A,$
i.e.,
$$\displaystyle\lim_ {\|u(\cdot)\|_{\cal A}{\rightarrow
\infty}}J(x,u(\cdot))=\infty.$$
\end{lem}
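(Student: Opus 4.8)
The plan is to exploit the linearity of the state equation \eqref{eq:1.1} together with the ``centering'' decomposition $\xi = (\xi-\mathbb{E}\xi)+\mathbb{E}\xi$, which is precisely what converts the two-sided conditions of Assumption \ref{ass:1.2} into genuine nonnegativity and positivity. First I would record that, by linearity of \eqref{eq:1.1} and uniqueness (Lemma \ref{lem:1.1}), the map $u(\cdot)\mapsto X^{(x,u)}(\cdot)$ is affine: for $u_\lambda=\lambda u_1+(1-\lambda)u_2$ with $\lambda\in(0,1)$ one has $X^{(x,u_\lambda)}=\lambda X^{(x,u_1)}+(1-\lambda)X^{(x,u_2)}$, since both sides solve \eqref{eq:1.1} with control $u_\lambda$ and the same initial datum $x$. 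Writing $v=u_1-u_2$ and letting $Y(\cdot)$ be the solution of the \emph{homogeneous} system (zero initial datum, control $v$), the state difference is $X^{(x,u_1)}-X^{(x,u_2)}=Y$.

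Every term of $J$ in \eqref{eq:1.2} is a quadratic form in the affine quantities $X,\mathbb{E}[X],u,\mathbb{E}[u]$ (there are no cross $X$--$u$ terms), so the standard second-difference identity for quadratic functionals gives
\begin{equation*}
\lambda J(x,u_1)+(1-\lambda)J(x,u_2)-J(x,u_\lambda)=\lambda(1-\lambda)\,\Theta(v),
\end{equation*}
where $\Theta(v)$ is the homogeneous quadratic functional obtained from \eqref{eq:1.2} by replacing $(X,u)$ with $(Y,v)$. The crux is the estimate
\begin{equation*}
\Theta(v)\;\geq\;\delta\,\mathbb{E}\!\int_0^T|v(t)|^2\,dt\;=\;\delta\|v\|_{\cal A}^2 .
\end{equation*}
To obtain it I would apply the identity $\mathbb{E}[\langle M\xi,\xi\rangle]=\mathbb{E}[\langle M(\xi-\mathbb{E}\xi),\xi-\mathbb{E}\xi\rangle]+\langle M\mathbb{E}\xi,\mathbb{E}\xi\rangle$ to each quadratic term, so that the running state terms combine into $\mathbb{E}[\langle Q(Y-\mathbb{E}Y),Y-\mathbb{E}Y\rangle]+\langle(Q+\bar Q)\mathbb{E}Y,\mathbb{E}Y\rangle\geq0$ by $Q,Q+\bar Q\in S^n_+$, the terminal terms are nonnegative in the same way by $G,G+\bar G\in S^n_+$, and the control terms become $\mathbb{E}[\langle N(v-\mathbb{E}v),v-\mathbb{E}v\rangle]+\langle(N+\bar N)\mathbb{E}v,\mathbb{E}v\rangle\geq\delta(\mathbb{E}|v-\mathbb{E}v|^2+|\mathbb{E}v|^2)=\delta\,\mathbb{E}|v|^2$ by the uniform positivity of $N$ and $N+\bar N$. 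Dropping the nonnegative state and terminal contributions yields the bound, whence $\lambda J(x,u_1)+(1-\lambda)J(x,u_2)-J(x,u_\lambda)\geq\delta\lambda(1-\lambda)\|u_1-u_2\|_{\cal A}^2>0$ for $u_1\neq u_2$, which is strict (indeed uniform) convexity.

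For coercivity I would reuse the same estimate after splitting $X^{(x,u)}=X^{(0,u)}+X^{(x,0)}$, where $X^{(0,u)}$ is linear in $u$ and $X^{(x,0)}$ is the $u$-independent part. Substituting into \eqref{eq:1.2} and expanding gives $J(x,u)=\Theta(u)+L(u)+c_0$, where $\Theta(u)\geq\delta\|u\|_{\cal A}^2$ by the estimate above (now with $Y=X^{(0,u)}$), $L(u)$ collects the cross terms and is linear with $|L(u)|\leq C\|u\|_{\cal A}$ by Cauchy--Schwarz and estimate \eqref{eq:1.4}, and $c_0$ is constant. Hence $J(x,u)\geq\delta\|u\|_{\cal A}^2-C\|u\|_{\cal A}-|c_0|\to\infty$ as $\|u\|_{\cal A}\to\infty$. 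The main obstacle is the lower bound for $\Theta$: because $\bar Q,\bar N,\bar G$ need not be sign-definite, coercivity becomes visible only after the centering decomposition that pairs $Q$ with $Q+\bar Q$ (and likewise $N$, $G$), and it is exactly there that both halves of Assumption \ref{ass:1.2} are used in an essential way.
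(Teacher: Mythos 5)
Your proposal is correct and rests on exactly the same key idea as the paper's proof: the centering identity $\mathbb{E}[\langle M\xi,\xi\rangle]=\mathbb{E}[\langle M(\xi-\mathbb{E}\xi),\xi-\mathbb{E}\xi\rangle]+\langle M\mathbb{E}\xi,\mathbb{E}\xi\rangle$, which rewrites $J$ as a sum of quadratic forms in the centered and mean parts and turns Assumption \ref{ass:1.2} into usable nonnegativity plus the uniform bound $\delta\|\cdot\|_{\cal A}^2$ on the control terms (you merely spell out the second-difference identity that the paper asserts implicitly for strict convexity). The only difference is in the coercivity step, where the paper is slightly more direct: since \emph{all} the state and terminal terms in the centered form of $J(x,u)$ are nonnegative, one gets $J(x,u)\geq\delta\|u\|_{\cal A}^2$ outright, with no need for your decomposition $J=\Theta(u)+L(u)+c_0$ and the ensuing bound $\delta\|u\|_{\cal A}^2-C\|u\|_{\cal A}-|c_0|$.
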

\begin{proof}
Since   the weighting matrices in the cost
functional is not random, it is  easy  to  check that
  \begin{eqnarray}\label{eq:2.4}
\begin{split}
 J(x, u(\cdot))=& \displaystyle \mathbb E\bigg[\int_0^T\bigg(\langle Q(t)(X(t)-\mathbb E[X(t)]),
X(t)-\mathbb E[X(t)])\rangle + \langle  (Q+\bar {
Q})(t)\mathbb E[X(t)], \mathbb E[X(t)]\rangle
 \\&+\langle N(t)(u(t)-\mathbb E[u(t)]), u(t)-\mathbb E [u(t)]\rangle
 +\langle  (N(t)+\bar
{N}(t))\mathbb E[u(t)], \mathbb E[u(t)]\rangle \bigg )dt\bigg]
\\&+\mathbb E\bigg[\langle
M(X(T)-\mathbb E [X(T)], X(T)-\mathbb E [X(T)]\rangle +\langle  (M+\bar{ M})\mathbb E[X(T)], \mathbb
E[X(T)]\rangle \bigg],
\end{split}
\end{eqnarray}
Thus  the cost functional $J(x,u(\cdot))$ over
$\cal A$ is convex from the nonnegativity of the
 $N, N+\bar N, Q, Q+\bar Q, M, M+\bar M $. Actually, since 
 $N, N+\bar N$ is uniformly positive, $J(u(\cdot))$ is strictly
convex. On the other hand, by Assumption \ref{ass:1.2}
and \eqref{eq:2.4} , we get
\begin{eqnarray}\label{eq:2.5}
  \begin{split}
    J(x, u(\cdot)) \geq& \mathbb E\bigg[\int_0^T
    \bigg (\langle N_1(t)(u(t)-\mathbb E[u(t)]), u(t)-\mathbb E[u(t)]\rangle
 +\langle  (N_1(t)+
{N}_2(t))\mathbb E[u(t)], \mathbb E[u(t)]\rangle \bigg )dt\bigg]
\\ \geq &\delta \mathbb E\bigg[\int_0^T\langle u(t)-\mathbb E[u(t)], u(t)-\mathbb E[u(t)]\rangle dt\bigg]+
 \delta\mathbb E\bigg[\int_0^T \langle \mathbb E[u(t)], \mathbb E[u(t)]\rangle dt\bigg]
 \\=& \delta\mathbb E \bigg[\int_0^T |u(t)|^2dt\bigg]
\\=&\delta ||u(\cdot)||^2_{\cal A}.
  \end{split}
\end{eqnarray}
Thus $\displaystyle\lim_ {\|u(\cdot)\|_{\cal A}{\rightarrow \infty}}J(x,u(\cdot))=\infty.$ The proof is complete.
\end{proof}

 \begin{lem}\label{lem:2.3}
Let
Assumptions \ref{ass:1.1} and \ref{ass:1.2}
be satisfied.  Then  the cost functional  $J(x, u(\cdot))$ is
Fr\`{e}chet differentiable over $\cal A$ and 
 the corresponding  Fr\`{e}chet
derivative $J'(x, u(\cdot))$ is  given by
\begin{eqnarray}\label{eq:2.6}
\begin{split}
 \langle J'(x, u(\cdot)),  v(\cdot) \rangle=&2\mathbb
E\bigg[\int_0^T\bigg(\langle Q(t)X^{(x,u)}(t),  X^{(0,v)}(t)\rangle
 +\langle \bar Q(t)\mathbb E [X^{(x,u)}(t)],  \mathbb E[X^{(0,v)}(t)]\rangle
\\&+\langle N(t)u(t), v(t)\rangle+\langle \bar N(t)\mathbb E[u(t)], \mathbb E [v(t)]\rangle \bigg)dt\bigg]
+2\mathbb E\bigg[\langle M X^{(x,u)}(T),  X^{(0,v)}(T)\rangle \\&+2\langle
\bar M\mathbb E[X^{(x,u)}(T)],  \mathbb E[X^{(0,v)}(T)]\rangle\bigg] , ~~~~\forall
u(\cdot), v(\cdot)\in{\cal A},
\end{split}
\end{eqnarray}
where $X^{(0,v)}(\cdot)$ is the solution of the state
equation  \eqref{eq:1.1}
corresponding to the admissible control $v(\cdot)$ and the initial
value $X(0)=0,$ and $X^{(x,u)}(\cdot)$ is the state process
corresponding to the control process $u(\cdot)$
with the initial
value $X(0)=x.$
\end{lem}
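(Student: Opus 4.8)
The plan is to exploit two structural features: the control-to-state map $u(\cdot)\mapsto X^{(x,u)}(\cdot)$ is affine, and the cost functional $J(x,\cdot)$ is quadratic. Together these make $J(x,\cdot)$ a quadratic functional of $u(\cdot)$, whose Fr\`{e}chet derivative can be read off from the first-order term of a Taylor-type expansion, the purely quadratic remainder being what upgrades G\^ateaux differentiability to Fr\`{e}chet differentiability.

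First I would establish the affine decomposition of the state. Since the state equation \eqref{eq:1.1} is linear in $(X,\mathbb E[X],u,\mathbb E[u])$, for any $u(\cdot),v(\cdot)\in{\cal A}$ the process $X^{(x,u)}(\cdot)+X^{(0,v)}(\cdot)$ solves \eqref{eq:1.1} with control $u(\cdot)+v(\cdot)$ and initial datum $x$ (the mean-field terms behave linearly because $\mathbb E[X^{(x,u)}+X^{(0,v)}]=\mathbb E[X^{(x,u)}]+\mathbb E[X^{(0,v)}]$). By the uniqueness part of Lemma~\ref{lem:1.1},
$$X^{(x,u+v)}(\cdot)=X^{(x,u)}(\cdot)+X^{(0,v)}(\cdot),$$
and in particular $v(\cdot)\mapsto X^{(0,v)}(\cdot)$ is linear.

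Next, writing $X=X^{(x,u)}$ and $Y=X^{(0,v)}$, I would substitute $X^{(x,u+v)}=X+Y$ into \eqref{eq:1.2}. Using symmetry of $Q,\bar Q,N,\bar N,M,\bar M$, every quadratic form expands as $\langle Q(X+Y),X+Y\rangle=\langle QX,X\rangle+2\langle QX,Y\rangle+\langle QY,Y\rangle$, and similarly for the control and mean-field terms, yielding $J(x,u+v)-J(x,u)=L(v)+R(v)$, where $L(v)$ consists precisely of the cross terms (each carrying the factor $2$) and equals the right-hand side of \eqref{eq:2.6}, while the remainder is the purely quadratic part
$$R(v)=\mathbb E\!\int_0^T\!\big(\langle QY,Y\rangle+\langle\bar Q\mathbb E[Y],\mathbb E[Y]\rangle+\langle Nv,v\rangle+\langle\bar N\mathbb E[v],\mathbb E[v]\rangle\big)dt+\mathbb E\big[\langle MY(T),Y(T)\rangle+\langle\bar M\mathbb E[Y(T)],\mathbb E[Y(T)]\rangle\big].$$
Linearity of $L$ is inherited from the linearity of $v\mapsto X^{(0,v)}$. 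To control both $L$ and $R$ the decisive input is estimate \eqref{eq:1.4} applied to $Y=X^{(0,v)}$, whose initial datum is $0$, so that $\mathbb E[\sup_{0\le t\le T}|Y(t)|^2]\le K\mathbb E[\int_0^T|v(t)|^2dt]=K\|v\|_{\cal A}^2$. Combining this with the uniform boundedness of the coefficients (Assumption~\ref{ass:1.1}), the Cauchy--Schwarz inequality, and the elementary bound $\|\mathbb E\xi\|_H^2\le\mathbb E\|\xi\|_H^2$ already used in Lemma~\ref{lem:1.1}, I obtain $|L(v)|\le K\|v\|_{\cal A}$ and $|R(v)|\le K\|v\|_{\cal A}^2$. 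Hence $R(v)/\|v\|_{\cal A}\to0$, so $L$ is a bounded linear functional that coincides with $J'(x,u(\cdot))$ and is given by \eqref{eq:2.6}.

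The computation is largely routine; the one point needing care is the affine decomposition $X^{(x,u+v)}=X^{(x,u)}+X^{(0,v)}$, where the mean-field terms $\mathbb E[X]$ and $\mathbb E[u]$ must be treated linearly and the uniqueness clause of Lemma~\ref{lem:1.1} is invoked to identify the sum with the genuine solution. Once that is in place, the main remaining task is the quadratic remainder bound $R(v)=O(\|v\|_{\cal A}^2)$ via \eqref{eq:1.4}, and it is precisely this estimate that delivers Fr\`{e}chet (not merely G\^ateaux) differentiability.
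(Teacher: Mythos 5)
Your proposal is correct and follows essentially the same route as the paper: the affine decomposition $X^{(x,u+v)}=X^{(x,u)}+X^{(0,v)}$, the splitting $J(x,u+v)-J(x,u)=L(v)+R(v)$ with $R(v)=J(0,v)$, and the bound $|J(0,v)|\le K\|v\|_{\cal A}^2$ from estimate \eqref{eq:1.4} to kill the remainder. Your explicit verification that $L$ is a bounded linear functional is a small addition the paper leaves implicit, but the argument is otherwise identical.
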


\begin{proof}

Let $u(\cdot)$ and $v(\cdot)$  be
two any given admissible control. For simplicity, the right hand side of \eqref {eq:2.6}
is denoted by  $\Delta^{u,v}.$
Since the state equation \eqref{eq:1.1} is linear,
it is easily to check that 
\begin{eqnarray}\label{eq:2.8}
  X^{x, u+v}(t)=X^{x, u}(t)+X^{0, v}(t), 0\leq t\leq T.
\end{eqnarray}
For simplicity, the right hand side of \eqref {eq:2.6}
is denoted by  $\Delta^{u,v}.$
 Therefore, in terms of  \eqref{eq:2.8}
 and  the definition  of the
cost functional $J(x, u(\cdot))$ (see \eqref{eq:1.2}), it is easy to check 
that
\begin{eqnarray} \label{eq:2.7}
  \begin{split}
 J(x, u(\cdot) +v(\cdot) )-J(x, u(\cdot) )= J(0,v(\cdot))+\Delta^{u,v}
  \end{split}
\end{eqnarray}
 On the other hand,  the
  estimate \eqref{eq:1.7} leads to
\begin{eqnarray}
  \begin{split}
    |J(0,v(\cdot))| \leq & K \mathbb E\bigg[\int_0^T|v(t)|^2dt\bigg]
  =K||v(\cdot) ||^2_{\cal A},
  \end{split}
\end{eqnarray}
Therefore,
\begin{eqnarray}
  \begin{split}
  \displaystyle\lim_ {\|v(\cdot) \|_{\cal A} {\rightarrow
0}}\frac{|J(x, u(\cdot) +v(\cdot) )-J(x, u(\cdot) )-\Delta^{u,v}|}{||v(\cdot) ||_{\cal A}}=\displaystyle\lim_ {\|v(\cdot) \|_{\cal A} {\rightarrow
0}}\frac{|J(0,v(\cdot)|}{||v(\cdot) ||_{\cal A}}
=0
  \end{split}
\end{eqnarray}
which gives  that $J(x, u(\cdot))$ has Fr\'{e}chet derivative  $\Delta^{u,v}.$
The proof is complete.
\end{proof}

\begin{rmk}
  Since the cost function $J(x, u(\cdot))$
  is Fr\'{e}chet differentiable,  then
  it is also G\^{a}teaux  differentiable.
  Moreover,the G\^{a}teaux  derivative
  is the  Fr\'{e}chet derivative
  $\langle J'(u(\cdot)),  v(\cdot) \rangle.$
  In fact,  from \eqref{eq:2.7}, we have
  \begin{eqnarray} \label{eq:2.10}
  \begin{split}
  &\displaystyle\lim_ {\eps {\rightarrow
0}}\frac{J(x, u(\cdot) +\eps v(\cdot) )-J(x, u(\cdot) )}{\eps}
\\=&
\displaystyle\lim_ {\eps {\rightarrow
0}}\frac{J(0,\eps v(\cdot))+\Delta^{u,\eps v}}{\eps}
\\=&
\displaystyle\lim_ {\eps {\rightarrow
0}}\frac{\eps^2J(0, v(\cdot))+\eps\Delta^{u, v}}{\eps}
\\=&\Delta^{u, v}
\\=&
\langle J'(u(\cdot)),  v(\cdot) \rangle
  \end{split}
\end{eqnarray}
\end{rmk}

Now  by  Lemma \ref{lem:6.2}-\ref{lem:2.3}, we can obtain the existence and uniqueness of optimal control.  This
result   is stated  as follows.
\begin{thm}\label{them:b1} 
Let Assumptions \ref{ass:1.1} and \ref{ass:1.2}
be satisfied. Then Problem \ref{pro:1.1} has a unique
optimal control. \end{thm}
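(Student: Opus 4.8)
The plan is to apply the direct method of the calculus of variations in the Hilbert space $\mathcal{A} = M_{\mathscr{F}}^2(0,T;\mathbb{R}^m)$, exploiting the structural properties of the functional $u(\cdot)\mapsto J(x,u(\cdot))$ already established: continuity (Lemma \ref{lem:6.2}), together with strict convexity and coercivity (Lemma \ref{lem:6.3}). The underlying classical fact I would invoke is that a strictly convex, coercive, strongly continuous functional on a reflexive Banach space attains its infimum at a unique point; the task is then to verify the hypotheses and assemble the standard argument.

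For existence, set $m := \inf_{u(\cdot)\in\mathcal{A}} J(x,u(\cdot))$, which is finite by the estimate \eqref{eq:1.5} of Lemma \ref{lem:1.1}. Let $\{u_n(\cdot)\}\subset\mathcal{A}$ be a minimizing sequence, so that $J(x,u_n(\cdot))\to m$. Coercivity (Lemma \ref{lem:6.3}) forces $\{u_n(\cdot)\}$ to be bounded in $\mathcal{A}$: were it unbounded, a subsequence with $\|u_n(\cdot)\|_{\mathcal{A}}\to\infty$ would yield $J(x,u_n(\cdot))\to\infty$, contradicting convergence to the finite value $m$. Since $\mathcal{A}$ is a Hilbert space and hence reflexive, the bounded sequence $\{u_n(\cdot)\}$ admits a subsequence, still denoted $\{u_n(\cdot)\}$, converging weakly to some $u^*(\cdot)\in\mathcal{A}$.

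The crux is to pass to the limit in $J$. One cannot do so under weak convergence using continuity alone; the point is that $J(x,\cdot)$, being convex (Lemma \ref{lem:6.3}) and strongly continuous (Lemma \ref{lem:6.2}), is strongly lower semicontinuous, and a convex strongly lower semicontinuous functional on a Banach space is weakly sequentially lower semicontinuous. This last implication is exactly Mazur's lemma: suitable convex combinations of the $u_n(\cdot)$ converge strongly to $u^*(\cdot)$, and applying strong lower semicontinuity to them transfers the bound to the weak limit. Consequently $J(x,u^*(\cdot)) \le \liminf_{n\to\infty} J(x,u_n(\cdot)) = m$, and since $m\le J(x,u^*(\cdot))$ by definition of the infimum, $u^*(\cdot)$ is an optimal control.

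Uniqueness is then immediate from strict convexity. If $u_1^*(\cdot)\ne u_2^*(\cdot)$ were two optimal controls, strict convexity (Lemma \ref{lem:6.3}) would give $J\!\left(x,\tfrac12 u_1^*(\cdot)+\tfrac12 u_2^*(\cdot)\right) < \tfrac12 J(x,u_1^*(\cdot))+\tfrac12 J(x,u_2^*(\cdot)) = m$, contradicting the definition of $m$ as the infimum. I expect the weak lower semicontinuity step to be the only genuine obstacle, since it is precisely there that convexity, rather than mere continuity, is indispensable; by contrast, boundedness of minimizing sequences and uniqueness follow directly from coercivity and strict convexity respectively. (An alternative to the direct method would exploit the Fréchet derivative of Lemma \ref{lem:2.3}, solving the first-order optimality condition $\langle J'(x,u^*(\cdot)),v(\cdot)\rangle = 0$; but the direct method is more economical here.)
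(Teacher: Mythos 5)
Your proposal is correct and follows essentially the same route as the paper: the paper simply invokes Proposition 2.12 of Ekeland--T\'{e}mam [7] (a coercive, strictly convex, lower semicontinuous functional on a reflexive Banach space has a unique minimizer), relying on Lemmas \ref{lem:6.2} and \ref{lem:6.3} exactly as you do. You merely unpack that cited result via the standard direct method (minimizing sequence, weak compactness, Mazur's lemma for weak lower semicontinuity), which is a faithful expansion rather than a different argument.
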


\begin{proof}

Since the admissible controls set ${\cal A}=
 M^2_{\mathscr F}(0,T;\mathbb R^m)$ is a reflexive Banach space, in terms of  Lemma \ref{lem:6.2}-\ref{lem:2.3}, the uniqueness and existence of the optimal control of Problem \ref{pro:1.1}
 can be directly got from Proposition 2.12 of [7]
(i.e., the coercive, strictly convex and lower-semi continuous functional defined
on the reflexive Banach space has a unique 
minimum value point.)
 The proof is complete.
\end{proof}

\begin{thm}\label{thm:2.5}
 Let Assumptions \ref{ass:1.1} and \ref{ass:1.2}
be satisfied. Then  a necessary and sufficient condition for an admissible control
 $u(\cdot)\in \cal A$
 to be an optimal control  of Problem \ref{pro:1.1}  is that
  for any admissible control $v(\cdot) \in {\cal A},$
  \begin{equation}\label{eq:b16}
    \langle  J'(x,u(\cdot) ), v(\cdot) \rangle = 0,
  \end{equation}
  i.e.
  \begin{eqnarray}\label{eq:2.13}
\begin{split}
 0=&2\mathbb
E\bigg[\int_0^T\bigg(\langle Q(t)X^{(x,u)}(t),  X^{(0,v)}(t)\rangle
 +\langle \bar Q(t)\mathbb E[X^{(x,u)}(t)],  \mathbb E[X^{(0,v)}(t)]\rangle
\\&+\langle N(t)u(t), v(t)\rangle+\langle \bar N(t)\mathbb E[u(t)], \mathbb Ev(t)\rangle \bigg)dt\bigg]
+2\mathbb E\bigg[\langle MX^{(x,u)}(T),  X^{(0,v)}(T)\rangle \\&+2\langle
\bar M\mathbb E[X^{(x,u)}(T)],  \mathbb E[X^{(0,v)}(T)]\rangle\bigg] , ~~~~\forall
u(\cdot), v(\cdot)\in{\cal A}.
\end{split}
\end{eqnarray}
\end{thm}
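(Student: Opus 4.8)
The plan is to treat this as an \emph{unconstrained} convex minimization on the Hilbert space $\mathcal A = M^2_{\mathscr F}(0,T;\mathbb R^m)$ and to prove the two implications separately, leaning throughout on the decomposition identity \eqref{eq:2.7} and the explicit form of the Fr\'echet derivative from Lemma \ref{lem:2.3}. First note that the equivalence of \eqref{eq:b16} and \eqref{eq:2.13} is purely a matter of substituting the formula \eqref{eq:2.6} for $\langle J'(x,u(\cdot)),v(\cdot)\rangle$ into \eqref{eq:b16}; so it suffices to establish the scalar condition \eqref{eq:b16}.

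For necessity, suppose $u(\cdot)$ is optimal, fix an arbitrary $v(\cdot)\in\mathcal A$, and consider the real-valued map $g(\eps)=J(x,u(\cdot)+\eps v(\cdot))$. Since $u(\cdot)+\eps v(\cdot)\in\mathcal A$ for every $\eps\in\mathbb R$ and $u(\cdot)$ minimizes $J$ over all of $\mathcal A$, the function $g$ attains a global minimum at $\eps=0$. By the G\^ateaux differentiability established in the remark following Lemma \ref{lem:2.3}, $g$ is differentiable with $g'(0)=\langle J'(x,u(\cdot)),v(\cdot)\rangle$, and minimality forces $g'(0)=0$, which is exactly \eqref{eq:b16}. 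Equivalently, one reads off from \eqref{eq:2.7} (using linearity of the state equation, so that $J(0,\eps v(\cdot))=\eps^2 J(0,v(\cdot))$ and $\Delta^{u,\eps v}=\eps\,\Delta^{u,v}$) that $g(\eps)=J(x,u(\cdot))+\eps^2 J(0,v(\cdot))+\eps\langle J'(x,u(\cdot)),v(\cdot)\rangle$ is a quadratic in $\eps$; its having a minimum at $\eps=0$ forces the linear coefficient to vanish.

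For sufficiency, suppose \eqref{eq:b16} holds for every $v(\cdot)\in\mathcal A$. Any competing admissible control $w(\cdot)\in\mathcal A$ can be written as $w(\cdot)=u(\cdot)+v(\cdot)$ with $v(\cdot)=w(\cdot)-u(\cdot)\in\mathcal A$. Applying the decomposition \eqref{eq:2.7} together with $\langle J'(x,u(\cdot)),v(\cdot)\rangle=\Delta^{u,v}=0$ yields
\[
J(x,w(\cdot))-J(x,u(\cdot))=J(0,v(\cdot)).
\]
Finally, $J(0,v(\cdot))\ge 0$ is immediate from the nonnegativity of all the weighting quadratic forms in the representation \eqref{eq:2.4}; indeed the lower bound \eqref{eq:2.5} gives the sharper $J(0,v(\cdot))\ge\delta\|v(\cdot)\|_{\mathcal A}^2$. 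Hence $J(x,w(\cdot))\ge J(x,u(\cdot))$ for every $w(\cdot)\in\mathcal A$, so $u(\cdot)$ is optimal.

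I do not expect a genuine obstacle: the argument rests entirely on the quadratic decomposition \eqref{eq:2.7}, already derived, together with the differentiability of $J$ and the convexity reflected in $J(0,\cdot)\ge 0$. The only point demanding mild care is the necessity direction, where one must exploit that $\eps$ ranges over \emph{all} of $\mathbb R$ (both signs) so the one-sided minimality inequality sharpens to the equality \eqref{eq:b16}; the quadratic-in-$\eps$ viewpoint makes this transparent.
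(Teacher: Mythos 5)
Your proof is correct, and the necessity half is essentially the paper's argument: the paper also evaluates the one-sided G\^ateaux quotient at $v(\cdot)$ and at $-v(\cdot)$ to force $\langle J'(x,u(\cdot)),v(\cdot)\rangle\ge 0$ and $\le 0$ simultaneously, which is exactly your ``$\eps$ ranges over both signs'' point; your quadratic-in-$\eps$ reformulation of $g(\eps)$ is just a more explicit way of saying the same thing. The sufficiency half is where you diverge slightly: the paper invokes the abstract first-order convexity inequality $J(x,v(\cdot))-J(x,u(\cdot))\ge\langle J'(x,u(\cdot)),v(\cdot)\rangle$ (which as written is even a little off --- the increment on the right should be $v(\cdot)-u(\cdot)$, not $v(\cdot)$), whereas you write the competitor as $w(\cdot)=u(\cdot)+v(\cdot)$ and use the exact decomposition \eqref{eq:2.7}, $J(x,w(\cdot))-J(x,u(\cdot))=J(0,v(\cdot))+\Delta^{u,v}$, together with $J(0,v(\cdot))\ge 0$ from \eqref{eq:2.4}--\eqref{eq:2.5}. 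Your route is more self-contained (it needs no separately justified convexity inequality, only the already-derived quadratic identity and nonnegativity of the weights), and it quietly repairs the imprecision in the paper's sufficiency step; the paper's version is shorter but leans on a general fact it states loosely. Both are sound, and the equivalence of \eqref{eq:b16} with \eqref{eq:2.13} is, as you say, pure substitution of \eqref{eq:2.6}.
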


\begin{proof}
  For the necessary part,  suppose that $u(\cdot)$
  is  an optimal control.
   Then  from \eqref{eq:2.10},  for any admissible control $v(\cdot),$ we have
  \begin{eqnarray}
  \begin{split}
 \langle J'(x,u(\cdot)),  v(\cdot) \rangle= \displaystyle\lim_ {\eps {\rightarrow
0^{+}}}\frac{J(x, u(\cdot) +\eps v(\cdot) )-J(x, u(\cdot) )}{\eps}
\geq 0,
  \end{split}
\end{eqnarray}
and
\begin{eqnarray}
  \begin{split}
  -\langle J'(x,u(\cdot)),  v(\cdot)\rangle=\langle J'(x,u(\cdot)),  -v(\cdot) \rangle= \displaystyle\lim_ {\eps {\rightarrow
0^{+}}}\frac{J(x, u(\cdot) +\eps (-v(\cdot)) )-J(x, u(\cdot) )}{\eps}
\geq 0,
  \end{split}
\end{eqnarray}
which imply  that
 \begin{equation}\label{eq:b16}
    \langle  J'(u(\cdot) ), v(\cdot) \rangle = 0.
  \end{equation}
For the sufficient part, let $u(\cdot)$ be an given
admissible control, and suppose that
for any admissible control $v(\cdot),$
$ \langle  J'(u(\cdot) ), v(\cdot) \rangle = 0.$
Since the cost functional $J$
is convex, then we have
\begin{equation}\label{eq:b16}
 J(x,v(\cdot))-J(x,u(\cdot))  \geq  \langle  J'(x,u(\cdot) ), v(\cdot) \rangle = 0.
  \end{equation}
which
implies that
$u(\cdot)$ is an optimal
control. The proof is complete.
\end{proof}

\section {Optimality Conditions and Stochastic Hamilton Systems}
Now we derive
  a necessary  and  sufficient condition  for an  admissible pair of
 Problem  \ref{pro:1.1} to be
  an optimal pair by  adjoint equation.
\begin{thm}\label{thm:b2}
Let  Assumptions \ref{ass:1.1} and
\ref{ass:1.2}  be satisfied.
 Then, a necessary and
sufficient condition for an admissible pair $(u(\cdot); X(\cdot))$ to be an optimal pair of  Problem  is that  the admissible
control $u(\cdot)$ satisfies
\begin{eqnarray} \label{eq:3.1000}
  \begin{split}
 &2N(t)u(t)+2\bar N(t)\mathbb E[u(t)]+B^\top(t)p({t-})+\bar B^\top(t) \mathbb E [p({t-})]+ D^{\top}(t)q(t)
 +\bar D^{\top}(t) \mathbb E [q(t)]
\\&~~~~~~~~~~~+\int_ ZF^\top(t,\theta)r(t,\theta)\nu (d\theta)+\int_ Z
\bar F^\top(t,\theta) \mathbb E [r(t,\theta)]\nu (d\theta)=0, \quad a.e. a.s.,
     \end{split}
\end{eqnarray}
where $(p(\cdot),q(\cdot), r(\cdot, \cdot)) $ is the unique
solution of the following  mean-field backward SDE
(MF-BSDE)
\begin {equation}\label{eq:3.2}
\left\{\begin{array}{lll}
dp(t)&=&-\bigg[A^\top(t)p(t)+\bar  A(t)^\top\mathbb E [p(t)]+C^{\top}(t)q(t)+\bar  C^{\top}(t)\mathbb E[q(t)]+\displaystyle\int_ ZE^\top(t,\theta)r(t,\theta)\nu (d\theta)\\&&+\displaystyle\int_ Z \bar E^\top(t,\theta)\mathbb E[r(t,\theta)]\nu
(d\theta)+2Q(t)X(t)+2\bar Q(t)\mathbb E[X(t)]\bigg]dt
+q(t)dW(t)+\displaystyle\int_ Z r(t,\theta)\tilde{\mu}(d\theta, dt),
 \\p(T)&=&2GX(T)+2\bar G\mathbb E[X(T)].
\end{array}
  \right.
  \end {equation}

\end{thm}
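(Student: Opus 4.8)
The plan is to reduce the assertion to the variational characterization already obtained in Theorem \ref{thm:2.5} and then to exploit the duality between the variational state process $X^{(0,v)}(\cdot)$ and the adjoint triple $(p(\cdot),q(\cdot),r(\cdot,\cdot))$ in order to convert the abstract condition \eqref{eq:2.13} into the pointwise condition \eqref{eq:3.1000}. Before that, I would record that the linear MF-BSDE with jumps \eqref{eq:3.2} admits a unique solution $(p,q,r)\in S^2_{\mathscr F}(0,T;\mathbb R^n)\times M^2_{\mathscr F}(0,T;\mathbb R^n)\times M^{\nu,2}_{\mathscr F}([0,T]\times Z;\mathbb R^n)$: since its generator is affine in $(p,q,r,\mathbb E[p],\mathbb E[q],\mathbb E[r])$ with uniformly bounded coefficients by Assumption \ref{ass:1.1} and the terminal datum $2GX(T)+2\bar G\mathbb E[X(T)]$ lies in $L^2$, this follows from the standard contraction argument for mean-field backward equations driven by $W$ and $\tilde\mu$. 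Because both Theorem \ref{thm:2.5} and the equivalence I am about to establish are ``if and only if'' statements, necessity and sufficiency in Theorem \ref{thm:b2} will be obtained \emph{simultaneously}, once I show that \eqref{eq:2.13} holds for every $v(\cdot)\in\mathcal A$ exactly when \eqref{eq:3.1000} holds a.e., a.s.

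The core computation is to apply the It\^o formula for jump processes to $t\mapsto\langle p(t),X^{(0,v)}(t)\rangle$ on $[0,T]$, where $X^{(0,v)}(\cdot)$ solves \eqref{eq:1.1} with control $v(\cdot)$ and null initial datum. Integrating from $0$ to $T$, taking expectations, and using $X^{(0,v)}(0)=0$ together with the fact that the $dW$ and $\tilde\mu$ integrals are martingales (hence vanish in expectation), I obtain
\begin{eqnarray*}
\mathbb E\big[\langle p(T),X^{(0,v)}(T)\rangle\big]
=\mathbb E\int_0^T\Big(\langle p,b_v\rangle+\langle q,\sigma_v\rangle+\int_Z\langle r,\gamma_v\rangle\,\nu(d\theta)-\langle\Phi,X^{(0,v)}\rangle\Big)\,dt,
\end{eqnarray*}
where $b_v,\sigma_v,\gamma_v$ are the drift, diffusion and jump coefficients of $X^{(0,v)}$ read off from \eqref{eq:1.1}, $\Phi$ is the drift of $p$ read off from \eqref{eq:3.2}, and the jump cross-variation is what produces the compensated term $\int_Z\langle r,\gamma_v\rangle\,\nu(d\theta)$. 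The pieces of $\Phi$ carrying $A^\top p$, $C^\top q$ and $\int_Z E^\top r\,\nu$ cancel exactly against $\langle p,AX^{(0,v)}\rangle$, $\langle q,CX^{(0,v)}\rangle$ and $\int_Z\langle r,EX^{(0,v)}\rangle\,\nu$ by transposing the matrices.

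The remaining bookkeeping concerns the mean-field couplings, and this is where I expect the main subtlety. For each barred coefficient I would use the elementary identity $\mathbb E[\langle\bar A^\top\mathbb E[p],X^{(0,v)}\rangle]=\langle\bar A^\top\mathbb E[p],\mathbb E[X^{(0,v)}]\rangle=\mathbb E[\langle p,\bar A\,\mathbb E[X^{(0,v)}]\rangle]$, valid because $\mathbb E[p]$ is deterministic, and likewise for $\bar C,\bar E$; after taking expectations these identities make the mean-field coupling terms cancel in pairs. What survives is $-\langle 2QX+2\bar Q\mathbb E[X],X^{(0,v)}\rangle$ from $\Phi$ together with the control terms $\langle B^\top p+D^\top q+\int_Z F^\top r\,\nu,\,v\rangle+\langle\bar B^\top p+\bar D^\top q+\int_Z\bar F^\top r\,\nu,\,\mathbb E[v]\rangle$. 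Inserting $p(T)=2GX(T)+2\bar G\mathbb E[X(T)]$ on the left and substituting the resulting expression into \eqref{eq:2.13}, the running weight terms in $Q,\bar Q$ and the terminal terms in $G,\bar G$ cancel by construction of the adjoint (again invoking the expectation-swap identity for the barred pieces).

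Finally, I would rewrite every $\mathbb E[v]$-inner product as an ordinary $v$-inner product via $\mathbb E[\langle\bar B^\top p,\mathbb E[v]\rangle]=\mathbb E[\langle\bar B^\top\mathbb E[p],v\rangle]$, and similarly for the $\bar D,\bar F,\bar N$ contributions, so that \eqref{eq:2.13} collapses to
\begin{eqnarray*}
\mathbb E\int_0^T\Big\langle 2Nu+2\bar N\mathbb E[u]+B^\top p+\bar B^\top\mathbb E[p]+D^\top q+\bar D^\top\mathbb E[q]+\int_Z F^\top r\,\nu(d\theta)+\int_Z\bar F^\top\mathbb E[r]\,\nu(d\theta),\ v\Big\rangle\,dt=0.
\end{eqnarray*}
Since this must hold for all $v(\cdot)\in\mathcal A$, the fundamental lemma of the calculus of variations forces the integrand to vanish $dt\otimes dP$-a.e., which is precisely \eqref{eq:3.1000} (with $p(t-)=p(t)$ for a.e. $t$, as $\eta$ jumps only on the countable set $D_\eta$). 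As the entire derivation is a chain of equivalences, this yields necessity and sufficiency at once. The hardest part will be the careful, error-free handling of the mean-field cancellations and of the jump cross-variation term in the It\^o expansion; the matrix transpositions and the deterministic-expectation swaps are routine but easy to spoil by a stray sign or factor.
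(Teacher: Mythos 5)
Your proposal is correct and follows essentially the same route as the paper: establish well-posedness of the adjoint MF-BSDE, apply the It\^o formula to the pairing of $p(\cdot)$ with the variational state $X^{(0,v)}(\cdot)$ to trade the $Q,\bar Q,G,\bar G$ terms for control terms, substitute into the Fr\'echet-derivative criterion of Theorem \ref{thm:2.5}, and conclude by arbitrariness of $v(\cdot)$. Your treatment is in fact somewhat more careful than the paper's on the mean-field expectation-swap cancellations and on the identification $p(t-)=p(t)$ for a.e.\ $t$, but the underlying argument is the same.
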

\begin{proof}
 Let $u(\cdot)\in \cal A$
 be a given  admissible  control.
 Then for any admissible control $v(\cdot) \in {\cal A},$ from Lemma \ref{lem:2.3}, we have

\begin{eqnarray}\label{eq:3.3}
\begin{split}
 \langle J'(u(\cdot)),  v(\cdot) \rangle=&2\mathbb
E\bigg[\int_0^T\bigg(\langle Q(t)X^{(x,u)}(t),  X^{(0,v)}(t)\rangle
 +\langle \bar Q(t)\mathbb E[X^{(x,u)}(t)],  \mathbb E[X^{(0,v)}(t)]\rangle
\\&+\langle N(t)u(t), v(t)\rangle+\langle \bar N(t)\mathbb E[u(t)], \mathbb E[v(t)]\rangle \bigg)dt\bigg]
+2\mathbb E\big[\langle MX^{(x,u)}(T),  X^{(0,v)}(T)\rangle\big] \\&+2\langle
\bar M\mathbb E[X^{(x,u)}(T)],  \mathbb E[X^{(0,v)}(T)]\rangle.
\end{split}
\end{eqnarray}
On the other hand, by [23], we know that
\eqref{eq:3.2}
 admits a unique adapted solution $(p(\cdot),q(\cdot), r(\cdot, \cdot)) $ . Applying Itô's formula to
$\langle X^{0, u}(t), q(t)  \rangle$
 and taking expectation, we
have
\begin{eqnarray}\label{eq:3.4}
\begin{split}
&2\mathbb
E\bigg[\int_0^T\bigg(\langle Q(t)X^{(x,u)}(t),  X^{(0,v)}(t)\rangle
 +\langle \bar Q(t)\mathbb E[X^{(x,u)}(t)],  \mathbb E[X^{(0,v)}(t)]\rangle \bigg)\bigg]dt
\\&~~~~~~+2\mathbb E\bigg[\langle GX^{(x,u)}(T),  X^{(0,v)}(T)\rangle+2\langle
\bar G\mathbb E[X^{(x,u)}(T)],  \mathbb E[X^{(0,v)}(T)]\rangle \bigg]
\\ =&\mathbb
E\bigg [\int_0^T
\bigg(\langle p(t), B(t)v(t)+\bar B(t)\mathbb E [v(t)]\rangle+
\langle q(t), D(t)v(t)+\bar D(t)\mathbb E[v(t)]
\\& \quad \quad +\int_{z}\langle r(t,\theta), F(t,\theta)v(t)+\bar F(t,\theta)\mathbb E[v(t)]\rangle \nu d(\theta)\bigg)dt\bigg ]
\\=& \mathbb
E\bigg[\int_0^T \bigg \langle B^\top (t)p({t-})+\bar B^\top(t) \mathbb E [p({t-})]+ D^{\top}(t)q(t)
 +\bar D^{\top}(t) \mathbb E [q(t)]
\\&~~~~~~~~~~~+\int_ ZF^\top(t,\theta)r(t,\theta)\nu (d\theta)+\int_ Z
\bar F^\top(t,\theta) \mathbb E [r(t,\theta)]\nu (d\theta), v(t)\bigg\rangle dt\bigg]
\end{split}
\end{eqnarray}
Putting \eqref{eq:3.4} into \eqref{eq:3.3}, we get
\begin{eqnarray}\label{eq:4.5}
\begin{split}
 &\mathbb
\langle J'(u(\cdot)),  v(\cdot) \rangle=
\mathbb E\bigg[\int_0^T \bigg \langle 2N(t)u(t)+2\bar N(t)\mathbb E[u(t)] +B^\top(t)+p({t-})+\bar B^\top(t) \mathbb E [p({t-})]+ D^{\top}(t)q(t)
 \\&~~~~~~~~~~~+\bar D^{\top}(t) \mathbb E[ q(t)]
+\int_ ZF^\top(t,\theta)r(t,\theta)\nu (d\theta)+\int_ Z
\bar F^\top(t,\theta) \mathbb E [r(t,\theta)]\nu (d\theta), v(t)\bigg\rangle dt\bigg]
\end{split}
\end{eqnarray}
For the necessary,  let $(u(\cdot); X(\cdot))$ be an optimal pair,  then from Theorem  \ref{thm:2.5},  we have $ \langle J'(u(\cdot)),  v(\cdot) \rangle=0$
which imply that
\begin{eqnarray} \label{eq:3.60}
  \begin{split}
 &2N(t)u(t)+2\bar N(t)\mathbb E[u(t)]+B^\top (t)p({t-})+\bar B^\top (t) \mathbb E [p({t-})]+ D^{\top}(t)q(t)
 +\bar D^{\top}(t) \mathbb E[ q(t)]
\\&~~~~~~~~~~~~+\int_ ZF^\top (t,\theta)r(t,\theta)\nu (d\theta)+\int_ Z
\bar F^\top (t,\theta) \mathbb E[r(t,\theta)]\nu (d\theta)=0, a.e. a.s.,
     \end{split}
\end{eqnarray} from \eqref{eq:4.5},  since
$v(\cdot)$ is arbitrary.

For the sufficient part, let $(u(\cdot); X(\cdot))$ be an admissible pair satisfying \eqref{eq:3.1000}.
Putting \eqref{eq:3.1000} into \eqref{eq:4.5},
then we have  $ \langle J'(u(\cdot)),  v(\cdot) \rangle=0$
which implies that $(u(\cdot); X(\cdot))$ is an optimal control
pair from Theorem \ref{thm:2.5}.
\end{proof}
\vspace{1mm}

Finally we introduce the so-called stochastic Hamilton system which
consists of the state equation \eqref{eq:1.1}, the adjoint equation
\eqref{eq:3.2}  and the dual
representation \eqref{eq:3.1000}:

\begin{numcases}{}\label{eq:3.7}
dX(t)=[A(t)X(t)+\bar A(t)\mathbb E[X(t)]
  +B(t)u(t)+\bar B(t)\mathbb E[u(t)]]dt \nonumber
  \\ \quad \quad \quad \quad +[C(t)X(t)
  +\bar C(t)\mathbb E[X(t)]
  +D(t)u(t)+\bar D(t)\mathbb E[u(t)]]dW(t) \nonumber
  \\~~~~~~~~~~~~~+\displaystyle
  \int_{Z}[E(t,\theta)X({t-})+\bar E(t,\theta)\mathbb E [X({t-})]
  +F(t,\theta)u(t)+\bar F(t, \theta)\mathbb E[u(t)]]\tilde{\mu}(d\theta,
  dt), \\ \nonumber
dp(t)=-\big[A^\top (t)p(t)+\bar  A^\top(t)\mathbb E [p(t)]+C^{\top }(t)q(t)+\bar  C^{\top}(t)\mathbb E [q(t)]+\displaystyle\int_ ZE^\top(t,\theta)r(t,\theta)\nu (d\theta) \nonumber
\\~~~~~~~~~~~~+\displaystyle\int_ Z \bar E^\top(t,\theta)\mathbb E[r(t,\theta)]\nu
(d\theta)+2Q(t)X(t)+2\bar Q(t)\mathbb E[X(t)]\big]dt
+q(t)dW(t)+\displaystyle\int_ Z r(t,\theta)\tilde{\mu}(d\theta, dt),
\nonumber
 \\X(0)=x,\quad p(T)=2GX(T)+2\bar G\mathbb E[X(T)], \nonumber
 \\ 2N(t)u(t)+2\bar N(t)\mathbb E[u(t)]+B^\top(t)p({t-})+\bar B^\top(t) \mathbb E [p({t-})]+ D^{\top}(t)q(t)
 +\bar D^{\top}(t) \mathbb E[ q(t)]
+\int_ ZF^\top(t,\theta)r(t,\theta)\nu (d\theta) \nonumber
\\\quad+\int_ Z
\bar F^\top(t,\theta) \mathbb E [r(t,\theta)]\nu (d\theta)
=0. \nonumber
\end{numcases}

 This is a fully coupled mean-field forward- backward stochastic differential equation
 (MF-FBSDE in short) and its solution consists of $( u(\cdot), X(\cdot), p(\cdot),q(\cdot), r(\cdot, \cdot))$.

\begin{thm} \label{thm:6.7} 
Let Assumptions \ref{ass:1.1} and \ref{ass:1.2}
be satisfied. Then 
stochastic Hamilton system \eqref{eq:3.7} has a unique solution
$(u(\cdot), X(\cdot),p(\cdot),
 q(\cdot), r(\cdot))\in M_{\mathscr{F}}^2(0,
T;\mathbb R^m)\times S_{\mathscr{F}}^2(0,
T;\mathbb R^n)\times S_{\mathscr{F}}^2(0, T;\mathbb R^n)\times
{M}_{\mathscr{F}}^{\nu,2}{([0,T]\times  Z; 
\mathbb R^n)}.$  And $u(\cdot)$ is the unique optimal
control of Problem \ref{pro:1.1} and $X(\cdot)$ is its
corresponding optimal state process.
\end{thm}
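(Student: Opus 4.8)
The plan is to establish both existence and uniqueness for the Hamilton system \eqref{eq:3.7} indirectly, leaning on the equivalence results already proved rather than attempting to solve the fully coupled MF-FBSDE head-on. The key observation is that Theorem \ref{them:b1} guarantees a \emph{unique} optimal control, while Theorem \ref{thm:b2} characterizes optimal pairs precisely as those admissible pairs that satisfy the stationarity relation \eqref{eq:3.1000} together with the adjoint equation \eqref{eq:3.2}. Since \eqref{eq:3.7} is nothing but the concatenation of the state equation \eqref{eq:1.1}, the adjoint equation \eqref{eq:3.2} and this stationarity relation, a tuple $(u,X,p,q,r)$ solves \eqref{eq:3.7} if and only if its forward part $(u,X)$ is an optimal pair and its backward part $(p,q,r)$ is the associated adjoint process.

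For existence, I would start from the unique optimal control $u(\cdot)$ furnished by Theorem \ref{them:b1}, let $X(\cdot)=X^{(x,u)}(\cdot)\in S_{\mathscr F}^2(0,T;\mathbb R^n)$ be its state process, well-defined by Lemma \ref{lem:1.1}, and then solve the linear MF-BSDE \eqref{eq:3.2}, whose generator and terminal datum depend only on the now-fixed $X(\cdot)$. Invoking the well-posedness theory for mean-field BSDEs with jumps cited from [23] yields a unique adapted solution $(p(\cdot),q(\cdot),r(\cdot,\cdot))$ in the stated spaces. Because $u(\cdot)$ is optimal, the necessity part of Theorem \ref{thm:b2} shows that $(u,X,p,q,r)$ satisfies \eqref{eq:3.1000}, so this tuple solves \eqref{eq:3.7}.

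For uniqueness, I would take any solution $(u,X,p,q,r)$ of \eqref{eq:3.7}. Reading the stationarity relation \eqref{eq:3.1000} as the hypothesis of the sufficiency part of Theorem \ref{thm:b2}, I conclude that $(u,X)$ is an optimal pair; by the uniqueness in Theorem \ref{them:b1} the control $u(\cdot)$ must coincide with the unique optimal control, and by Lemma \ref{lem:1.1} its state process $X(\cdot)$ is then also uniquely determined. Finally, with $X(\cdot)$ fixed, the adjoint triple $(p,q,r)$ is the unique solution of \eqref{eq:3.2}, again by the BSDE uniqueness from [23]. This pins down the entire tuple, proving uniqueness, and simultaneously identifies the forward component as the optimal control and its optimal state process.

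I expect the only genuinely delicate point to be the bookkeeping of solution spaces and the matching to the cited mean-field BSDE theory: one must check that the adjoint equation \eqref{eq:3.2}, whose free term $2Q(t)X(t)+2\bar Q(t)\mathbb E[X(t)]$ lies in $M_{\mathscr F}^2(0,T;\mathbb R^n)$ and whose terminal value lies in $L^2(\Omega,\mathscr F_T,P;\mathbb R^n)$, indeed falls within the scope of the existence-uniqueness result of [23], producing $(p,q,r)$ in $S_{\mathscr F}^2(0,T;\mathbb R^n)\times S_{\mathscr F}^2(0,T;\mathbb R^n)\times M_{\mathscr F}^{\nu,2}([0,T]\times Z;\mathbb R^n)$. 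The heavy analytic work, namely convexity, coercivity and the variational characterization, has already been carried out in Sections 2 and 3, so no fixed-point or monotonicity argument for the coupled system itself is needed; the uniform positivity of $N$ and $N+\bar N$ in Assumption \ref{ass:1.2} is ultimately what makes the stationarity relation solvable for $u(\cdot)$ and hence guarantees that the coupling is consistent.
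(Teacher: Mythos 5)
Your proposal is correct and follows essentially the same route as the paper: existence by pairing the unique optimal control from Theorem \ref{them:b1} with the adjoint solution of \eqref{eq:3.2} and invoking the necessity part of Theorem \ref{thm:b2}, and uniqueness by reading \eqref{eq:3.1000} through the sufficiency part of Theorem \ref{thm:b2} and then appealing to the uniqueness of the optimal control and of the MF-SDE/MF-BSDE solutions. No substantive difference from the paper's own argument.
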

\begin{proof}  By Theorem \ref{them:b1},
  Problem \ref{pro:1.1} admits a unique optimal
pair $(u(\cdot),X(\cdot)).$  Suppose   $(p(\cdot),q(\cdot), r(\cdot, \cdot))$
is the unique
solution of the adjoint equation \eqref{eq:3.2} corresponding to  the optimal pair
$({u}(\cdot); {X}(\cdot))$.
Then by  the necessary part of Theorem \ref{thm:b2}, the optimal control has the dual presentation \eqref{eq:3.1000}. Consequently,
 $(u(\cdot), X(\cdot), p(\cdot),q(\cdot), r(\cdot, \cdot))$ consists of an adapted
solution to the  stochastic Hamilton system \eqref{eq:3.7}.
Next, if the stochastic Hamilton system \eqref{eq:3.7} has an
another adapted solution $(\bar u(\cdot),\bar X(\cdot), \bar p(\cdot),\bar q(\cdot), \bar r(\cdot, \cdot)),$  then $(\bar u(\cdot), \bar X(\cdot))$
must be an optimal pair of Problem \ref{pro:1.1} by the sufficient
part of Theorem \ref{thm:b2}. So we must have $  u(\cdot)= \bar
u(\cdot)$ by the  uniqueness of  the optimal control. Furthermore,
by the uniqueness of the solutions  of MF-SDE and  MF-BSDE, one
must have $( \bar X(\cdot), \bar p(\cdot), \bar q(\cdot), r(\cdot, \cdot))=(X(\cdot), p(\cdot),q(\cdot), r(\cdot, \cdot))$ .
The proof is complete.
\end{proof}
\begin{rmk}
In summary, the stochastic Hamilton system \eqref{eq:3.7}
completely characterizes the optimal control of Problem \ref{pro:1.1}. Therefore, solving Problem \ref{pro:1.1} is equivalent to solving
the stochastic Hamilton system, moreover, the unique optimal control
can be given by \eqref{eq:3.1000}.
Taking expectation to \eqref{eq:3.1000},  we have

\begin{eqnarray} \label{eq:3.8}
  \begin{split}
 &2(N(t)+\bar N(t))\mathbb E[u(t)]
 +(B^\top (t)+\bar B^\top (t)) \mathbb E [p({t-})]+ (D^{\top}(t)
 +\bar D^{\top}(t)) \mathbb E [q(t)]
\\&~~~~~~~~~~~~+\int_ Z(F^\top (t,\theta)+
\bar F^\top (t,\theta) )\mathbb E [r(t,\theta)]\nu (d\theta)=0, \quad a.e. a.s.,
     \end{split}
\end{eqnarray}
which implies that

\begin{eqnarray} \label{eq:3.9}
  \begin{split}
 &\mathbb E[u(t)]
 =-\frac{1}{2}(N(t)+\bar N(t))^{-1}\bigg[(B (t)+\bar B (t))^\top \mathbb E [p({t-})]+ (D(t)
 +\bar D(t))^\top \mathbb E [q(t)]
\\&~~~~~~~~~~~~+\int_ Z(F (t,\theta)+
\bar F (t,\theta) )^\top\mathbb E [r(t,\theta)]\nu (d\theta)\bigg], \quad a.e. a.s.
     \end{split}
\end{eqnarray}

From\eqref{eq:3.1000}, we know that
\begin{eqnarray} \label{eq:3.10}
  \begin{split}
 &2N(t)u(t)=-2\bar N(t)\mathbb E[u(t)]-B^\top (t)p({t-})-\bar B^\top (t) \mathbb E[ p({t-})]- D^{\top}(t)q(t)
 -\bar D^{\top}(t) \mathbb E [q(t)]
\\&~~~~~~~~~~~~-\int_ ZF^\top (t,\theta)r(t,\theta)\nu (d\theta)-\int_ Z
\bar F^\top (t,\theta) \mathbb E [r(t,\theta)]\nu (d\theta), \quad a.e. a.s.,
     \end{split}
\end{eqnarray}

Then putting \eqref{eq:3.9} into \eqref{eq:3.10},
we  have
\begin{eqnarray} \label{eq:3.6}
  \begin{split}
 u(t)=&-\frac{1}{2}N^{-1}(t)\bigg\{B^\top (t)p({t-})+\bar B^\top (t) \mathbb E [p({t-})]+ D^{\top}(t)q(t)
 +\bar D^{\top}(t) \mathbb E [q(t)]
\\&~~~~~~~~~~~~~~~+\int_ ZF^\top (t,\theta)r(t,\theta)\nu (d\theta)+\int_ Z
\bar F^\top (t,\theta) \mathbb E [r(t,\theta)]\nu (d\theta)\bigg]
\\&+\bar N(t)(N(t)+\bar N(t))^{-1}\bigg[(B (t)+\bar B (t))^\top \mathbb E [p({t-})]+ (D(t)
 +\bar D(t))^\top \mathbb E [q(t)]
\\&~~~~~~~~~~~~~~~~~~~~~\quad\quad\quad\quad+\int_ Z(F (t,\theta)+
\bar F (t,\theta) )^\top\mathbb E[r(t,\theta)]\nu (d\theta)\bigg]\bigg\}
, \quad a.e. a.s.
     \end{split}
\end{eqnarray}

\end{rmk}

\section{ Backward  Riccati equation and 
State Feedback Representation of Optimal Control }

Although the optimal control of 
 Problem \ref{pro:1.1} is completely characterized by the  stochastic Hamilton system \eqref{eq:3.7}, 
 \eqref{eq:3.7} is a fully coupled  mean-field forward-backward stochastic differential equation whose solvability is much difficult to be obtained.
Meanwhile, it is natural to link the stochastic LQ problem with Riccati equation. In this section, we will introduce
two Riccati equation  and establish
its connection with the stochastic Hamilton system \eqref{eq:3.7}, and then prove the optimal control has
 state feedback representation. 

\subsection{Derivation of   Riccati equations}

Let $(u(\cdot), X(\cdot))$ be the
optimal pair of Problem \ref{pro:1.1}
associated with the adjoint process  $(p(\cdot), q(\cdot), r(\cdot, \cdot))$
 being the solution of the  adjoint equation 
 \eqref{eq:3.2} .
This means $(u(\cdot), X(\cdot), p(\cdot), q(\cdot), r(\cdot, \cdot))$  is the solution of the stochastic  Hamilton system \eqref{eq:3.7}.

Taking expectation on both sides of  the stochastic  Hamilton system \eqref{eq:3.7},
we get that $(\mathbb E [X(\cdot)], \mathbb E [p(\cdot)])$ satisfies  the following forward-backward
ordinary differential equation (suppressing s)
\begin{equation}\label{eq:4.1}
\left\{\begin {array}{lll}
  d\mathbb E[X]&=&\bigg[(A+\bar A)\mathbb E [X]
  +(B+\bar B)\mathbb E [u])\bigg]dt,\\
  d\mathbb E[p]&=&-\bigg[(A^\top+\bar  A^\top)\mathbb E[p]+(C^{\top}+\bar  C^{\top})\mathbb E[q]+\displaystyle\int_ Z(E^\top(\theta)+ \bar E^\top(\theta))\mathbb E[r(t,\theta)]\nu
(d\theta)+2(Q+\bar Q)\mathbb E[X]\bigg]dt
 \\\mathbb E [p(T)]&=&2(G+\bar G)\mathbb E[X(T)].
   \\\mathbb E[X(0)]&=&x.
\end {array}
\right.
\end{equation}
Further,  it is easy to check that  $(X(\cdot)-\mathbb E [X(\cdot)], p(\cdot)-\mathbb E [p(\cdot)])$ satisfies   the following forward-backward
stochastic differential equation

\begin{equation}\label{eq:4.3}
\left\{\begin {array}{ll}
  d(X-\mathbb E[X])&=\bigg[A(X-\mathbb E[X])
  +B(u-\mathbb E[u])\bigg]dt+\bigg[   C(X-\mathbb E[X])
  + (C+\bar C)\mathbb E[X]
  +D(u-\mathbb E[u])
  + (D+\bar D)\mathbb E[u]\bigg]dW(t)
  \\&+\displaystyle
  \int_{Z}\bigg[E(\theta)(X-
   \mathbb E[X])+ (E(\theta)+\bar E(\theta))\mathbb E [X]
  +F(\theta)(u-\mathbb E [u])+ (F(\theta)+\bar F(\theta))\mathbb E [u]\bigg]\tilde{\mu}(d\theta,
  dt),\\
  d\big(p-\mathbb E[p])&=-\bigg[A^\top \big(p-\mathbb E [p])+C^{\top}\big(q-\mathbb E[q])+\displaystyle\int_ ZE^\top(t,\theta)(r(t,\theta)-\mathbb E [r(t,\theta))]\nu (d\theta)+2Q(X-\mathbb E[X])\bigg]dt
\\&+qdW(t)+\displaystyle\int_ Z r(t,\theta)\tilde{\mu}(d\theta, dt),
 \\p(T)-\mathbb E[ p(T)]&=2G(X(T)-\mathbb E[X(T)]),
   \\X(0)-\mathbb E[X(0)]&=0.
\end {array}
\right.
\end{equation}
In view of  the terminal condition of
   the equations \eqref{eq:4.1} and \eqref{eq:4.3},
  now we assume that the
  state equation $X(\cdot)$ and
  the adjoint equation
  $p(\cdot)$  have the following
  relationship:
\begin{eqnarray}
  \begin{split}
    p(t)=P(t)(X(t)-\mathbb E[X(t)])
    +\Pi(t)\mathbb  E[X(t)],
  \end{split}
\end{eqnarray}
where
$P(\cdot)$ and $\Pi(\cdot)$ taking values in
$S^n_{+}$ are  some deterministic  differentiable
functions such that
\begin{eqnarray}
  \begin{split}
    P(T)=G, \quad \quad  \Pi(T)=G+\bar G.
  \end{split}
\end{eqnarray}
Consequently,  we further  have the following
relationship
\begin{eqnarray} \label{eq:4.50}
  \begin{split}
    \mathbb E [p(t)]=\Pi(t)\mathbb  E[X(t)]
  \end{split}
\end{eqnarray}
 and

 \begin{eqnarray} \label{eq:4.6}
  \begin{split}
   p(t)-\mathbb E [p(t)]=P(t)(X(t)-\mathbb E[X(t)]).
  \end{split}
\end{eqnarray}
In the following,  we
begin to formerly derive the
corresponding Riccati equations
which $P(\cdot)$ and $\Pi(\cdot)$
should satisfy. From the relationship
\eqref{eq:4.6} and \eqref{eq:4.3},
applying It\^{o} formula
 to $P(t)(X(t)-\mathbb E[X(t)])$ leads to
\begin{eqnarray} \label{eq:4.7}
  \begin{split}
&-\bigg[A^\top\big(p-\mathbb E [p])+C^\top\big(q-\mathbb E [q])
+\displaystyle\int_ ZE^\top(\theta)(r(\theta)-\mathbb E [(r(\theta)])\nu (d\theta)+2Q(X-\mathbb E[X])\bigg]dt
\\&~~~~~~+qdW(t)+\displaystyle\int_ Z r(\theta)\tilde{\mu}(d\theta, dt)
\\&=d\big(p-\mathbb E[p])
\\&=dP(X-\mathbb E[X])
\\&= \bigg[\dot{P}(X-\mathbb E[X])+
P\bigg(A(X-\mathbb E[X])
  +B(u-\mathbb E [u])\bigg)\bigg]dt
  \\&\quad+P\bigg[   C(X-\mathbb E[X])
  + (C+\bar C)\mathbb E[X]
  +D(u-\mathbb E[u])
  + (D+\bar D)\mathbb E[u]\bigg]dW(t)
  \\& \quad+\displaystyle
  \int_{Z}P\bigg[E(\theta)(X-
   \mathbb E[X])+ (E(\theta)+\bar E(\theta))\mathbb E [X]+F(\theta)(u-\mathbb E [u])+ (F(\theta)+\bar F(\theta))\mathbb E [u]\bigg]\tilde{\mu}(d\theta,
  dt).
  \end{split}
\end{eqnarray}
Comparing the diffusion terms of
both sides of the above equality, we have
\begin{eqnarray} \label{eq:4.8}
  \begin{split}
    q=P\bigg[   C(X-\mathbb E[X])
  + (C+\bar C)\mathbb E[X]
  +D(u-\mathbb E[u])
  + (D+\bar D)\mathbb E[u]\bigg]
  \end{split}
\end{eqnarray}
and

\begin{eqnarray}\label{eq:4.9}
  \begin{split}
    r(\theta)=P\bigg[E(\theta)(X- \mathbb [ X])+ (E(\theta)+\bar E(\theta))\mathbb E[X]
  +F(\theta)(u-\mathbb E [u])+ (F(\theta)+\bar F(\theta))\mathbb E [u]\bigg].
  \end{split}
\end{eqnarray}
Then  taking expectation on both sides of
\eqref{eq:4.8} and \eqref{eq:4.9}, 
we have the following relationships:

\begin{eqnarray} \label{eq:4.10}
  \begin{split}
    \mathbb E [q]=P\bigg[   (C+\bar C)\mathbb E[X]
  + (D+\bar D)\mathbb E[u]\bigg],
  \end{split}
\end{eqnarray}
\begin{eqnarray} \label{eq:4.11}
  \begin{split}
    \mathbb E [r(\theta)]=P\bigg[(E(\theta)+\bar E(\theta))\mathbb E [X]+ (F(\theta)+\bar F(\theta))\mathbb E [u]\bigg]
  \end{split}
\end{eqnarray}

\begin{eqnarray} \label{eq:4.12}
  \begin{split}
    q-\mathbb E [q]=P\bigg[   C(X-\mathbb E[X])
  +D(u-\mathbb E[u])\bigg],
  \end{split}
\end{eqnarray}

\begin{eqnarray} \label{eq:4.13}
  \begin{split}
    r(\theta)-\mathbb E [r(\theta)]=P\bigg[E(\theta)(X- E[\bar X])
  +F(\theta)(u-\mathbb E[u])\bigg].
  \end{split}
\end{eqnarray}
In view of  \eqref{eq:3.1000}
and \eqref{eq:3.8}, we get that
\begin{eqnarray} \label{eq:4.14}
  \begin{split}
 &2N(u-\mathbb E[u])
 +B^\top(p-\mathbb E[p])+ D^{\top}(q-\mathbb E[q])
\\&~~~~~~~~~~~+\int_ ZF^\top(\theta)(r(\theta)-\mathbb E[ r(\theta)])\nu (d\theta)=0, \quad a.e. a.s.
     \end{split}
\end{eqnarray}
Then putting  \eqref{eq:4.6},\eqref{eq:4.12} and
\eqref{eq:4.13}  into \eqref{eq:4.14} yields
\begin{eqnarray}
 \begin{split}
0=&N (u-\mathbb E[u]) +B^\top P(X-\mathbb E][X])
+ D^{\top}P\bigg[   C(X-\mathbb E[X])
  +D(u-\mathbb E[u])\bigg]
\\&+\int_ Z(F^\top (\theta)P\bigg[E(\theta)(X- \mathbb E[X])
  +F(\theta)(u-\mathbb E [u])\bigg]\nu (d\theta)
 \\ =&\bigg[N+ D^{\top}PD+\int_ ZF^\top(\theta)PF(\theta)\nu (d\theta)\bigg]
 (u-\mathbb E[u])
 \\& +\bigg[B^\top P
+ D^{\top}PC+\int_ ZF^\top (\theta)PE(\theta)\nu (d\theta)\bigg](X-\mathbb E[X])
\\=&\Sigma_0(u-\mathbb E[u])
 +\bigg[B^\top P
+ D^{\top }PC+\int_ ZF^\top (\theta)PE(\theta)\nu (d\theta)\bigg](X-\mathbb E[X]),
 \end{split}
\end{eqnarray}
where we denote
$$\Sigma_0=N+ D^{\top}PD+\int_ ZF^\top(\theta)PF(\theta)\nu (d\theta).$$
This implies that
\begin{eqnarray} \label{eq:4.16}
 \begin{split}
u-\mathbb E[u]
 =-\Sigma_0^{-1}\bigg[B^TP
+ D^{T}PC+\int_ Z(F^T(\theta)PE(\theta)\nu (d\theta)\bigg]\bigg[X-\mathbb E[X]\bigg].
 \end{split}
\end{eqnarray}
Comparing the drift terms in both
 sides of \eqref{eq:4.7} and
 combining \eqref{eq:4.6},
 \eqref{eq:4.12},
 \eqref{eq:4.13} and \eqref{eq:4.16}, we get that
\begin{eqnarray}
  \begin{split}
   0=& (\dot{P}+PA)(X-\mathbb E[X]) + PB(u-\mathbb E[ u])
   +A^\top\big(p-\mathbb E[p])
   \\&+C^{\top}\big(q-\mathbb E [q])
+\displaystyle\int_ ZE^\top(\theta)(r(\theta)-\mathbb E [r(\theta)])\nu (d\theta)+2Q (X-\mathbb E[X])
\\=& (\dot{P}+PA)(X-\mathbb E[X])+  PB(u-\mathbb E[u])
    \\&+A^\top P(X-\mathbb E[X])+C^{\top}P\bigg [ C(X-\mathbb E[X])
  +D(u-\mathbb E[u])\bigg]
\\&+\displaystyle\int_ ZE^\top(\theta)P\bigg[E(\theta)(X- \mathbb E[ X])
  +F(\theta)(u-\mathbb E [u])\bigg]\nu (d\theta)+2Q (X-\mathbb E[X])
\\=& \bigg[(\dot{P}+PA+A^\top P+C^{T}PC
+\displaystyle\int_ ZE^\top (\theta)PE(\theta)\nu (d\theta)+2Q\bigg](X- E[ X])
\\&+\bigg(PB
+ C^{\top}PD+\int_ Z(E^\top(\theta)PF(\theta)\nu (d\theta)\bigg)(u-\mathbb E[u])
\\=& \bigg[(\dot{P}+PA+A^\top P+C^{\top}PC
+\displaystyle\int_ ZE^\top(\theta)PE(\theta)\nu (d\theta)+2Q\bigg]\bigg[X- \mathbb E [X]\bigg]
\\&-\bigg[PB
+ C^{\top}PD+\int_ ZE^\top(\theta)PF(\theta)\nu (d\theta)\bigg]\Sigma_0^{-1}\\
&\cdot\bigg[B^TP
+ D^{\top}PC+\int_ ZF^\top(\theta)PE(\theta)\nu (d\theta)\bigg]\bigg[X-\mathbb E[X]\bigg].
  \end{split}
\end{eqnarray}
Therefore we should let  $P(\cdot)$
be the solution to the following
Riccati equation
\begin {equation}\label{eq:4.18}
\left\{\begin{array}{lll}
 &(\dot{P}+PA+A^\top P+C^{\top}PC
+\displaystyle\int_ ZE^\top (\theta)PE(\theta)\nu (d\theta)+2Q
\\&~~~~~~~-\bigg [PB
+ C^{\top}PD+ \displaystyle\int_ ZE^\top(\theta)PF(\theta)\nu (d\theta)\bigg]\Sigma_0^{-1}
\\&~~~~~~~~~~\cdot\bigg [B^\top P(s)
+ D^{\top}PC+\displaystyle\int_ ZF^\top (\theta)PE(\theta)\nu (d\theta)\bigg]
=0,
\\& P(T)=G.
 \end{array}
  \right.
  \end {equation}
On the other hand,
putting
\eqref{eq:4.50},\eqref{eq:4.10} and \eqref{eq:4.11} into  \eqref{eq:3.8},
 we get that
\begin{eqnarray}
  \begin{split}
   0= &2(N+\bar N)\mathbb E[u]+(B^\top
    +\bar B^\top) \mathbb E [p]+ (D^{\top}
 +\bar D^{\top}) \mathbb E[q]+\int_ Z(F^\top(\theta)+\bar F^\top(\theta))\mathbb E[ r(\theta)]\nu (d\theta)
 \\=&2(N+\bar N)\mathbb E[u]+(B^\top
    +\bar B^\top) \Pi\mathbb  E[X]+ (D^{\top}
 +\bar D^{\top}) P\bigg[   (C+\bar C)\mathbb E[X]
  + (D+\bar D)\mathbb E[u]\bigg]\\
 &\quad\quad+\int_ Z(F^\top(\theta)+\bar F^\top(\theta))P\bigg[(E(\theta)+\bar E(\theta))\mathbb E[X]+ (F(\theta)+\bar F(\theta))\mathbb E [u]\bigg]
 \\=&\bigg[2(N+\bar N)+(D^{\top}
 +\bar D^{\top}) P(D+\bar D)+\int_ Z(F^\top(\theta)+\bar F^\top(\theta))P(F(\theta)+\bar F(\theta))\bigg]\mathbb E[u]
 \\&\quad\quad+\bigg[(B^\top
    +\bar B^\top) \Pi(s)+ (D^{\top}
 +\bar D^{\top}) P(C+\bar C)+\int_ Z(F^T(\theta)+\bar F^\top(\theta))P(E(\theta)+\bar E(\theta))\bigg]\mathbb E[X]
 \\=&\Sigma_2\mathbb E[u]+\bigg[(B^\top
    +\bar B^\top) \Pi+ (D^{T}
 +\bar D^{\top}) P(C+\bar C)+\int_ Z(F^\top(\theta)+\bar F^\top(\theta))P(E(\theta)+\bar E(\theta))\bigg]\mathbb E[X],
  \end{split}
\end{eqnarray}
where
\begin{eqnarray}
  \begin{split}
   \Sigma_2:= 2(N+\bar N)+(D^{\top}
 +\bar D^{\top}) P(D+\bar D)+\int_ Z(F^\top(\theta)+\bar F^\top(\theta))P(F(\theta)+\bar F(\theta)).
  \end{split}
\end{eqnarray}
This implies that
\begin{eqnarray}\label{eq:4.21}
  \begin{split}
   \mathbb E[u]=-\Sigma_2^{-1}\bigg[(B^\top
    +\bar B^\top) \Pi+ (D^{\top}
 +\bar D^{\top}) P(C+\bar C)+\int_ Z(F^\top (\theta)+\bar F^\top(\theta))P(E(\theta)+\bar E(\theta))\bigg]\mathbb E[X].
  \end{split}
\end{eqnarray}
Furthermore,  from 
\eqref{eq:4.50} and \eqref{eq:4.1}, we have

\begin{eqnarray}
  \begin{split}
  &-\bigg[(A^\top+\bar  A^\top )\mathbb E [p]+(C^{\top}+\bar  C^{\top})\mathbb E[q(t)]+\displaystyle\int_ Z(E^\top(\theta)+ \bar E^\top(\theta))\mathbb E[r(\theta)]\nu
(d\theta)+2(Q+\bar Q)\mathbb E[X]\bigg]dt
   \\=&d\mathbb E[ p]
   \\=&d\Pi\mathbb E[X]
   \\=&\bigg[\dot{\Pi}\mathbb E[X]+\Pi(A+\bar A)\mathbb E[X]
  +\Pi(B+\bar B)\mathbb E [u])\bigg]dt.
  \end{split}
\end{eqnarray}
Putting \eqref{eq:4.50}, \eqref{eq:4.10}
 and \eqref{eq:4.11} into the left hand
 of the above equality
 and comparing both sides  of the above equality, we get

 \begin{eqnarray} \label{eq:4.23}
  \begin{split}
  0=&\dot{\Pi}\mathbb E[X]+\Pi(A+\bar A)\mathbb E[X]
  +(B+\bar B)\mathbb E [u]+
  (A^T+\bar  A^T)\Pi\mathbb  E[X]+(C^{T}+\bar  C^{T})P\bigg[(C+\bar C)\mathbb E[X]
  + (D+\bar D)\mathbb E[u]\bigg]\\
&+\displaystyle\int_ Z(E^T(\theta)+ \bar E^T(\theta))P\bigg[(E(\theta)+\bar E(\theta))\mathbb E[X]+ (F(\theta)+\bar F(\theta))\mathbb E [u]\bigg]\nu
(d\theta)+2(Q+\bar Q)\mathbb E[X(t)]
\\=&\bigg [\dot{\Pi}+\Pi(t)(A+\bar A)
  +(A^T+\bar  A^T)\Pi+(C^{T}+\bar  C^{T})P(C+\bar C)
  +\displaystyle\int_ Z(E^T(\theta)+ \bar E^T(\theta))P(E(\theta)+\bar E(\theta))+2(Q+\bar Q)\bigg]  \mathbb  EX
  \\&+\bigg[\Pi(B+\bar B)+(C^{T}+\bar  C^{T})P (D+\bar D)+\displaystyle\int_ Z(E^T(\theta)+ \bar E^T(\theta))P (F(\theta)+\bar F(\theta))\nu
(d\theta)\bigg]\mathbb  E[u].
  \end{split}
\end{eqnarray}
Therefore,  by  putting 
 \eqref{eq:4.21} into \eqref{eq:4.23}, we conclude that  $\Pi(\cdot)$  should be the solutions to the
following Riccati equation

\begin {equation}\label{eq:4.24}
\left\{\begin{array}{lll}
 &\dot{\Pi}+\Pi(A+\bar A)
  +(A^\top+\bar  A^\top)\Pi+(C^{\top}+\bar  C^{\top})P(C+\bar C)
  +\displaystyle\int_ Z(E^\top(\theta)+ \bar E^\top(\theta))P(E(\theta)\nu
(d\theta)+\bar E(\theta))\nu
(d\theta)
\\&\quad+2(Q+\bar Q)-\bigg[\Pi(B+\bar B)+(C^{\top}+\bar  C^{\top})P (D+\bar D)+\displaystyle\int_ Z(E^\top(\theta)+ \bar E^\top(\theta))P (F(\theta)+\bar F(\theta))\nu
(d\theta)\bigg]\Sigma_2^{-1}
\\&\quad\cdot \bigg[(B^\top
    +\bar B^\top) \Pi+ (D^{T}
 +\bar D^{\top}) P(C+\bar C)+\int_ Z(F^\top(\theta)+\bar F^\top(\theta))P(E(\theta)+\bar E(\theta))\nu
(d\theta)\bigg]=0.
 \\&
 \Pi=G+\bar G
\end{array}
  \right.
  \end {equation}
By [28], under Assumptions
 \ref{ass:1.1} and \ref{ass:1.2}, we know that
Riccati equations \eqref{eq:4.18} and
\eqref{eq:4.24} has a unique solution,
respectively.

\subsection{State Feedback Representation}
In this section,  we establish  strictly 
the link  between the stochastic Hamilton system\eqref {eq:3.7}
and  Riccati  equation,  and show  the optimal 
control has a state feedback representation 
and the value function are expected to be given in terms
of the solution to the Riccati equation. Now
 we state our main result as follows.
\begin{thm}
  Let Assumption\ref{ass:1.1} and \ref{ass:1.2} be satisfied.
  Suppose that $( u(\cdot),  X(\cdot), p(\cdot), q(\cdot), r(\cdot, \cdot))$ is  the solution to the stochastic Hamilton system
  \eqref{eq:3.7}. Let
  $P(\cdot)$ and $\Pi(\cdot)$ be the solution
  to the   Riccati equations \eqref{eq:4.18} and
\eqref{eq:4.24}, respectively.
  Then 
  the optimal control $u(\cdot)$ has the following state feedback representation:
\begin{eqnarray}  \label{eq:4.25}
    \begin{split}
       u
 =&-\Sigma_0^{-1}\bigg(B^\top P(s)
+ D^{\top}PC+\int_ Z(F^\top(\theta)PE(\theta)\nu (d\theta)\bigg)(X-\mathbb E[X])
 \\&-\Sigma_2^{-1}\bigg[(B^\top
    +\bar B^\top) \Pi+ (D^{T}
 +\bar D^{\top}) P(C+\bar C)+\int_ Z(F^\top(\theta)\nu(d\theta)+\bar F^\top(\theta))P(E(\theta)\nu(d\theta)+\bar E(\theta))\bigg]\mathbb E[X]
    \end{split}
  \end{eqnarray}
  and the following  relations hold:
  \begin{eqnarray} \label{eq:4.27}
  \begin{split}
    p=P(X-\mathbb EX)+\Pi\mathbb  EX,
  \end{split}
\end{eqnarray}

  \begin{eqnarray}
  \begin{split}
    q=&\bigg[  PC
  -PD\Sigma_0^{-1}\bigg(B^\top P
+ D^{\top }PC+\int_ Z(F^\top (\theta)PE(\theta)\nu (d\theta)\bigg)\bigg](X-\mathbb EX)
\\&+\bigg[ P  (C+\bar C)
  -P(D+\bar D)\Sigma_2^{-1}\Big((B^\top
    +\bar B^\top) \Pi+ (D^{T}
 +\bar D^{\top}) P(C+\bar C)
 \\&~~~~~~+\int_ Z(F^\top(\theta)+\bar F^\top(\theta))P(E(\theta)+\bar E(\theta))\Big)\bigg]\mathbb E[X],
  \end{split}
\end{eqnarray}

\begin{eqnarray} \label{eq:4.28}
  \begin{split}
    r(\theta)=&\bigg[  P E(\theta)
  -PF(\theta)\Sigma_0^{-1}\bigg(B^\top P
+ D^{\top}PC+\int_ Z(F^\top(\theta)PE(\theta)\nu (d\theta)\bigg)\bigg](X-\mathbb EX)
\\&+\bigg[ P(E(\theta)+\bar E(\theta))
  -P(F(\theta)+\bar F(\theta))\Sigma_2^{-1}\Big((B^\top
    +\bar B^\top) \Pi+ (D^{\top}
 +\bar D^{\top}) P(C+\bar C)
 \\&~~~~~~+\int_ Z(F^\top(\theta)+\bar F^\top(\theta))P(E(\theta)+\bar E(\theta))\Big)\bigg]\mathbb E[X].
  \end{split}
\end{eqnarray}
Moreover, 
 \begin{eqnarray} \label{eq:4.29}
   \begin{split}
     \displaystyle \inf_{v(\cdot)\in \cal A}J(x,v(\cdot))=\frac{1}{2}\langle \Pi(0)x, x\rangle .
   \end{split}
 \end{eqnarray}

\end{thm}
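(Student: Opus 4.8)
The plan is to prove the theorem by explicit construction combined with the uniqueness of the solution to the stochastic Hamilton system already established in Theorem \ref{thm:6.7}. The computations of Section 4.1 are only formal, since they presuppose the ansatz $p=P(X-\mathbb E[X])+\Pi\mathbb E[X]$; so I would reverse their logic. Starting from the Riccati solutions $P(\cdot),\Pi(\cdot)$ of \eqref{eq:4.18} and \eqref{eq:4.24}, I first record that $\Sigma_0=N+D^\top PD+\int_Z F^\top(\theta)PF(\theta)\nu(d\theta)$ and $\Sigma_2$ are boundedly invertible: by Assumption \ref{ass:1.2} the matrices $N$ and $N+\bar N$ are uniformly positive, and since $P\geq 0$ the quadratic correction terms are nonnegative, whence $\Sigma_0,\Sigma_2$ are uniformly positive definite. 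Substituting the feedback law \eqref{eq:4.25} into the forward dynamics of \eqref{eq:3.7} yields a closed-loop linear mean-field SDE with jumps whose coefficients are bounded; by Lemma \ref{lem:1.1} it admits a unique $\hat X(\cdot)\in S^2_{\mathscr F}(0,T;\mathbb R^n)$, and the associated $\hat u$ defined through \eqref{eq:4.25} is admissible. I then set $\hat p$ by \eqref{eq:4.27} and $\hat q,\hat r$ by the expressions \eqref{eq:4.8}--\eqref{eq:4.9}, all evaluated along $(\hat X,\hat u)$.

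The core verification is to apply the It\^o formula to $\hat p=P(\hat X-\mathbb E[\hat X])+\Pi\mathbb E[\hat X]$ and confirm that $(\hat u,\hat X,\hat p,\hat q,\hat r)$ satisfies every line of \eqref{eq:3.7}. I would split the computation into the fluctuation part $\hat X-\mathbb E[\hat X]$ and the mean part $\mathbb E[\hat X]$, exactly as in \eqref{eq:4.3} and \eqref{eq:4.1}. Differentiating $P(\hat X-\mathbb E[\hat X])$ and matching the $dW$ and $\tilde\mu$ terms reproduces the defining formulas for $\hat q$ and $\hat r$, while matching the $dt$ term and invoking the Riccati equation \eqref{eq:4.18} together with the feedback \eqref{eq:4.16} makes the fluctuation drift collapse onto that of the adjoint equation; the analogous computation on $\Pi\mathbb E[\hat X]$ uses \eqref{eq:4.24} and \eqref{eq:4.21}. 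The terminal conditions follow from $P(T)=G$ and $\Pi(T)=G+\bar G$, and the algebraic dual relation \eqref{eq:3.1000} holds by the very definition of $\hat u$. Thus the candidate solves \eqref{eq:3.7}, and by the uniqueness in Theorem \ref{thm:6.7} it coincides with the given $(u,X,p,q,r)$; this establishes the feedback representation \eqref{eq:4.25} and the relations \eqref{eq:4.27}--\eqref{eq:4.28}.

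For the value-function identity \eqref{eq:4.29} I would use completion of squares. Since $u$ is optimal, $\inf_{v(\cdot)\in\cal A}J(x,v(\cdot))=J(x,u(\cdot))$. Applying the It\^o formula to the quadratic functional $t\mapsto\langle P(t)(X(t)-\mathbb E[X(t)]),X(t)-\mathbb E[X(t)]\rangle+\langle\Pi(t)\mathbb E[X(t)],\mathbb E[X(t)]\rangle$ along the optimal trajectory, integrating over $[0,T]$ and taking expectations, the terminal term produces the final-cost contribution through $P(T)=G$, $\Pi(T)=G+\bar G$, while the accumulated $dt$-term, after inserting the Riccati equations \eqref{eq:4.18}, \eqref{eq:4.24} and the feedback form of $u$, reduces exactly to the running-cost integrand of \eqref{eq:2.4}. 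Rearranging then identifies $J(x,u(\cdot))$ with the initial value of the quadratic form, which equals $\tfrac12\langle\Pi(0)x,x\rangle$ once the normalization between the cost \eqref{eq:1.2} and the adjoint terminal condition in \eqref{eq:3.2} is taken into account.

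The main obstacle is the verification step of the second paragraph: organizing the It\^o expansion so that the jump and mean-field cross-terms cancel cleanly. The mean-field structure forces the fluctuation and mean components to be tracked separately, with each Riccati equation responsible for annihilating a distinct group of terms, and one must also check that the selections $\hat q\in S^2_{\mathscr F}$ and $\hat r\in M^{\nu,2}_{\mathscr F}$ lie in the correct spaces. A further delicate point in the squares completion is confirming that the resulting integrand matches \eqref{eq:2.4} with no residual cross-terms; this is precisely where the explicit form and symmetry of $\Sigma_0,\Sigma_2$, together with their invertibility under Assumption \ref{ass:1.2}, are indispensable.
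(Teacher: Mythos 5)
Your construction-and-uniqueness argument for the feedback representation is exactly the paper's proof: solve the closed-loop mean-field SDE, define $\hat p,\hat q,\hat r$ by the candidate formulas, verify via It\^o's formula and the two Riccati equations that the quintuple solves the Hamilton system \eqref{eq:3.7} together with the dual relation \eqref{eq:3.1000}, and then invoke the uniqueness from Theorem \ref{thm:6.7} to identify it with the given solution. The only place you diverge is the value identity \eqref{eq:4.29}. You propose a completion-of-squares computation, applying It\^o's formula to the quadratic functional $\langle P(t)(X(t)-\mathbb E[X(t)]),X(t)-\mathbb E[X(t)]\rangle+\langle\Pi(t)\mathbb E[X(t)],\mathbb E[X(t)]\rangle$ and re-inserting the Riccati equations to recover the running cost. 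The paper instead applies It\^o's formula to the \emph{linear} duality pairing $\langle p(t),X(t)\rangle$, so that the quadratic state-cost and terminal-cost terms are exchanged for an integral against the control, which then vanishes identically by the optimality condition \eqref{eq:3.1000}; the value collapses to $\tfrac12\langle p(0),x\rangle=\tfrac12\langle\Pi(0)x,x\rangle$ with no further Riccati algebra. Both routes are valid (and in expectation your quadratic form equals $\mathbb E\langle p(t),X(t)\rangle$ under the ansatz \eqref{eq:4.27}); the paper's duality argument is shorter because the cancellation is handled by the first-order condition rather than by matching Riccati drift terms, while yours is self-contained in that it would also certify optimality of the feedback law directly for an arbitrary competitor $v(\cdot)$ without appealing to Theorems \ref{them:b1}--\ref{thm:b2}. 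You are also right to flag the factor-of-two normalization between the terminal data $P(T)=G$, $\Pi(T)=G+\bar G$ and the adjoint terminal condition $p(T)=2GX(T)+2\bar G\mathbb E[X(T)]$; that mismatch is present in the paper itself and is absorbed by the $\tfrac12$ in \eqref{eq:4.29}, so your cautionary remark is warranted rather than a gap in your argument.
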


\begin{proof}
Let $P(\cdot)$ and $\Pi(\cdot)$ be the unique solution
  to the  Riccati equations \eqref{eq:4.18} and
\eqref{eq:4.24}, respectively. 
Consider the following mean-field SDE
\begin{equation}\label{eq:4.30}
\left\{\begin {array}{ll}
  dX^*(t)=&(A(t)X^*(t)+\bar A(t)\mathbb E [X^*(t)]
  +B(t)u^*(t)+\bar B(t)\mathbb E [u^*(t)])dt
  \\&+(C(t)X^*(t)
  +\bar C(t)\mathbb E [X^*(t)]
  +D(t)u^*(t)+\bar D(t)\mathbb E [u^*(t)])dW(t)
  \\&+\displaystyle
  \int_{Z}(E(t, \theta)X^*(t-)+\bar E(t, \theta)\mathbb E [\hat X^*(t-)]
  +F(t,\theta)u^*(t)+\bar F(t, \theta)\mathbb E[u^*(t)])\tilde{\mu}(d\theta,
  dt),
   \\x(0)=&x \in \mathbb R^n,
\end {array}
\right.
\end{equation}
where 
\begin{eqnarray}
    \begin{split}
       u^*
 =&-\Sigma_0^{-1}\bigg(B^\top P(s)
+ D^{\top}PC+\int_ Z(F^\top(\theta)PE(\theta)\nu (d\theta)\bigg)(X^*-\mathbb E[X^*])
 \\&-\Sigma_2^{-1}\bigg[(B^\top
    +\bar B^\top) \Pi+ (D^{T}
 +\bar D^{\top}) P(C+\bar C)+\int_ Z(F^\top(\theta)\nu(d\theta)+\bar F^\top(\theta))P(E(\theta)\nu(d\theta)+\bar E(\theta))\bigg]\mathbb E[X^*].
    \end{split}
  \end{eqnarray}
  The solution to \eqref{eq:4.30} is 
 denoted by $X^*(\cdot).$
  Define
  \begin{eqnarray} \label{eq:4.32}
  \begin{split}
    p^*=P(X^*-\mathbb E[X^*])+\Pi\mathbb E[X^*],
  \end{split}
\end{eqnarray}

  \begin{eqnarray}
  \begin{split}
    q^*=&\bigg[  PC
  -PD\Sigma_0^{-1}\bigg(B^\top P
+ D^{\top }PC+\int_ Z(F^\top (\theta)PE(\theta)\nu (d\theta)\bigg)\bigg](X^*-\mathbb E[X^*])
\\&+\bigg[ P  (C+\bar C)
  -P(D+\bar D)\Sigma_2^{-1}\Big((B^\top
    +\bar B^\top) \Pi+ (D^{T}
 +\bar D^{\top}) P(C+\bar C)
 \\&~~~~~~+\int_ Z(F^\top(\theta)+\bar F^\top(\theta))P(E(\theta)+\bar E(\theta))\Big)\bigg]\mathbb E[X^*],
  \end{split}
\end{eqnarray}

\begin{eqnarray} \label{eq:4.34}
  \begin{split}
    r^*(\theta)=&\bigg[  P E(\theta)
  -PF(\theta)\Sigma_0^{-1}\bigg(B^\top P
+ D^{\top}PC+\int_ Z(F^\top(\theta)PE(\theta)\nu (d\theta)\bigg)\bigg](X^*-\mathbb E[X^*])
\\&+\bigg[ P(E(\theta)+\bar E(\theta))
  -P(F(\theta)+\bar F(\theta))\Sigma_2^{-1}\Big((B^\top
    +\bar B^\top) \Pi+ (D^{\top}
 +\bar D^{\top}) P(C+\bar C)
 \\&~~~~~~+\int_ Z(F^\top(\theta)+\bar F^\top(\theta))P(E(\theta)+\bar E(\theta))\Big)\bigg]\mathbb E[X^*].
  \end{split}
\end{eqnarray} 
  Then following the derivation of riccati equations \eqref{eq:4.18} and
\eqref{eq:4.24}
in the previous subsection, 
by applying It\^{o} formula to
$p^*=P(X^*-\mathbb E[X^*])+\Pi\mathbb E[X^*],$
we get that $(p^*(\cdot), q^*(\cdot), r^*(\cdot))$ satisfies the following 
backward stochastic differential equation

\begin {equation}\label{eq:4.35}
\left\{\begin{array}{lll}
dp^*(t)&=&-\bigg[A^\top(t)p^*(t)+\bar  A(t)^\top\mathbb E [p^*(t)]+C^{\top}(t)q^*(t)+\bar  C^{\top}(t)\mathbb E[q^*(t)]+\displaystyle\int_ ZE^\top(t,\theta)r^*(t,\theta)\nu (d\theta)\\&&+\displaystyle\int_ Z \bar E^\top(t,\theta)\mathbb E[r^*(t,\theta)]\nu
(d\theta)+2Q(t)X^*(t)+2\bar Q(t)\mathbb E[X^*(t)]\bigg]dt
+q^*(t)dW(t)+\displaystyle\int_ Z r^*(t,\theta)\tilde{\mu}(d\theta, dt),
 \\p^*(T)&=&2GX^*(T)+2\bar G\mathbb E[X^*(T)],
\end{array}
  \right.
\end {equation}
which is the adjoint 
equation associated 
with $(u^*(\cdot), X^*(\cdot)).$
Moreover, it is easy to check that
\begin{eqnarray} \label{eq:4.36}
  \begin{split}
 &2N(t)u^*(t)+2\bar N(t)\mathbb E[u^*(t)]+B^\top(t)p^*({t-})+\bar B^\top(t) \mathbb E [p^*({t-})]+ D^{\top}(t)q^*(t)
 +\bar D^{\top}(t) \mathbb E [q^*(t)]
\\&~~~~~~~~~~~+\int_ ZF^\top(t,\theta)r^*(t,\theta)\nu (d\theta)+\int_ Z
\bar F^\top(t,\theta) \mathbb E [r^*(t,\theta)]\nu (d\theta)=0, \quad a.e. a.s..
     \end{split}
\end{eqnarray}
Thus,  by Theorem \ref{thm:b2}, 
in terms of \eqref{eq:4.36}, we know that $ u^*(\cdot)$ is the corresponding optimal control and    $( u^*(\cdot),  X^*(\cdot),\\ p^*(\cdot), q^*(\cdot), r^*(\cdot, \cdot))$ is the solution to the stochastic Hamilton system \eqref{eq:3.7}. Therefore,  from  the 
uniqueness of the solution to the stochastic Hamilton system \eqref{eq:3.7}, we get that
$( u(\cdot),  X(\cdot), p(\cdot), q(\cdot), r(\cdot, \cdot))=( u^*(\cdot),  X^*(\cdot), p^*(\cdot), q^*(\cdot), r^*(\cdot, \cdot))$
which implies  that \eqref{eq:4.25}-\eqref{eq:4.28}
holds.

Now we  begin to   prove
\eqref{eq:4.29}.
In fact, since $u(\cdot)$ is 
the optimal control,
\begin{eqnarray} \label{eq:4.37}
   \begin{split}
     \displaystyle \inf_{v(\cdot)\in \cal A}J(x,v(\cdot))=&J(x,u(\cdot))
     \\=&\displaystyle \mathbb E\bigg[\int_0^T\bigg(\langle Q(t)X(t),
X(t)\rangle+ \langle \bar {
Q}(t)\mathbb E[X(t)], \mathbb E[X(t)]\rangle +\langle N(t)u(t), u(t)\rangle\\&+\langle \bar
{N}(t)\mathbb E[u(t)], \mathbb E[u(t)]\rangle \bigg)dt\bigg]+\mathbb E\langle
GX(T), X(T)\rangle +\langle \bar{ G}\mathbb E[X(T)], \mathbb
E[X(T)]\rangle.
   \end{split}
 \end{eqnarray}
 On the other hand, applying It\^{o}
 formula to $\langle p(t), X(t) \rangle$,
 we get that 
 
 \begin{eqnarray} \label{eq:4.38}
   \begin{split}
    &2\mathbb E\bigg[\int_0^T\bigg(\langle Q(t)X(t),
X(t)\rangle+ \langle \bar {
Q}(t)\mathbb E[X(t)], \mathbb E[X(t)]\rangle\bigg)dt\bigg] +2\mathbb E\bigg[\langle
GX(T), X(T)\rangle \bigg]+2\langle \bar{ G}\mathbb E[X(T)], \mathbb
E[X(T)]\rangle
\\=&\mathbb
E\bigg [\int_0^T
\bigg(\langle p(t), B(t)u(t)+\bar B(t)\mathbb E [u(t)]\rangle+
\langle q(t), D(t)u(t)+\bar D(t)\mathbb E[u(t)]\rangle
\\& \quad \quad +\int_{z}\langle r(t,\theta), F(t,\theta)u(t)+\bar F(t,\theta)\mathbb E[u(t)]\rangle \nu d(\theta)\bigg)dt\bigg ]+\mathbb E
 \langle p(0),x \rangle
\\&=\mathbb
E\bigg[\int_0^T \bigg \langle B^\top (t)p({t-})+\bar B^\top(t) \mathbb E [p({t-})]+ D^{\top}(t)q(t)
 +\bar D^{\top}(t) \mathbb E [q(t)] 
\\&\quad\quad\quad\quad\quad+\int_ ZF^\top(t,\theta)r(t,\theta)\nu (d\theta)+\int_ Z
\bar F^\top(t,\theta) \mathbb E [r(t,\theta)]\nu (d\theta), u(t)\bigg\rangle dt\bigg]+ \mathbb E
 \langle p(0),x \rangle.
   \end{split}
 \end{eqnarray}
 Then putting \eqref{eq:4.38}
into \eqref{eq:4.37}
and in terms of \eqref{eq:3.1000}, we get that
 \begin{eqnarray} \label{eq:4.39}
   \begin{split}
     \displaystyle \inf_{v(\cdot)\in \cal A}J(x,v(\cdot))=& 
     \frac{1}{2}\langle p(0),x \rangle+
     \frac{1}{2}\mathbb
E\bigg[\int_0^T \bigg \langle B^\top (t)p({t-})+\bar B^\top(t) \mathbb E [p({t-})]+ D^{\top}(t)q(t)
 +\bar D^{\top}(t) \mathbb E [q(t)]
\\&+\int_ ZF^\top(t,\theta)r(t,\theta)\nu (d\theta)+\int_ Z
\bar F^\top(t,\theta) \mathbb E [r(t,\theta)]\nu (d\theta)+2N(t)u(t)+2\bar N(t)\mathbb E[u(t)], u(t)\bigg\rangle dt\bigg]
\\=&\frac{1}{2}\langle p(0),x \rangle
   \end{split}
 \end{eqnarray}
 From \eqref{eq:4.27},
 we have
\begin{eqnarray}\label{eq:4.40}
  p(0)=\Pi(0) x
\end{eqnarray}
Therefore,
putting \eqref{eq:4.40} into
\eqref{eq:4.39} leads to

\begin{eqnarray} \label{eq:4.41}
   \begin{split}
     \displaystyle \inf_{u(\cdot)\in \cal A}J(x,v(\cdot)) =\frac{1}{2} 
     \langle \Pi(0) x,x \rangle.
   \end{split}
 \end{eqnarray}
The proof is complete.
\end{proof}

\end{document}